\documentclass[11pt]{article}
\usepackage{amsfonts}
\usepackage{amssymb}

\usepackage{amsmath,amsthm,enumerate}
\usepackage{color}
\usepackage[pdftex,colorlinks=true,bookmarks=false,citecolor=blue]{hyperref}

=0pt
\def\sqr#1#2{{\vcenter{\vbox{\hrule height.#2pt
              \hbox{\vrule width.#2pt height#1pt \kern#1pt \vrule
width.#2pt}
              \hrule height.#2pt}}}}
\def\3n{\negthinspace \negthinspace \negthinspace }
\def\2n{\negthinspace \negthinspace }
\def\1n{\negthinspace }

\def\no{\noindent}

\def\bs{\bigskip}

\def\({\Big (}
\def\){\Big )}
\def\[{\Big[}
\def\]{\Big]}

\def\={\buildrel \triangle \over =}

at 9pt 

\def\be{\begin{equation}}
\def\bel{\begin{equation}\label}
\def\ee{\end{equation}}
\def\bea{\begin{eqnarray}}
\def\eea{\end{eqnarray}}
\def\bt{\begin{theorem}}
\def\et{\end{theorem}}
\def\bc{\begin{corollary}}
\def\ec{\end{corollary}}
\def\bl{\begin{lemma}}
\def\el{\end{lemma}}
\def\bp{\begin{proposition}}
\def\ep{\end{proposition}}
\def\br{\begin{remark}}
\def\er{\end{remark}}
\def\ba{\begin{array}}
\def\ea{\end{array}}
\def\bd{\begin{definition}}
\def\ed{\end{definition}}

\newtheorem{lemma}{Lemma}[section]
\newtheorem{remark}{Remark}[section]
\newtheorem{example}{Example}[section]
\newtheorem{theorem}{Theorem}[section]
\newtheorem{corollary}{Corollary}[section]

\newtheorem{definition}{Definition}[section]
\newtheorem{proposition}{Proposition}[section]

\makeatletter
   
   \@addtoreset{equation}{section}
\makeatother

\allowdisplaybreaks

\begin{document}

\title{\bf Numerical Integration at the Intersection of Probability Theory, Number Theory and Dynamical Systems\thanks{The research of the first and third authors is partially supported by NSF of China under grant 11931011, New Cornerstone Investigator Program, and the Science Development Project of Sichuan University under grant 2020SCUNL201. The research of the second author is partially supported
by the Key Project of the Sichuan Science and Technology Education Joint Fund under grant 2024NSFSC1963  and the National Natural Science Foundation of China  under grant 12071324.}}

\author{Zicheng Wang\thanks{School of Mathematics, Sichuan University, Chengdu 610064,  China.
{\small\it E-mail:} {\small\tt
zicheng$\_$wang@yeah.net}.},~~~ Haisen Zhang\thanks{School of Mathematical Sciences, Sichuan Normal University, Chengdu  610068, China. {\small\it E-mail:}
{\small\tt haisenzhang@yeah.net}.}~~~ and~~~ Xu
Zhang\thanks{School of Mathematics, Sichuan
University, Chengdu 610064,
China.  {\small\it
E-mail:} {\small\tt zhang$\_$xu@scu.edu.cn}.}}

\date{}

\maketitle

\begin{abstract}
This paper is devoted to the study of uniformly distributed (u.d.) sequences on general non-compact topological spaces. Convergence-determining classes and the equivalent characterizations of u.d.\ sequences on $T_4$ spaces are discussed. For Polish spaces  endowed with regular probability measures, the existence of a countable convergence-determining class for u.d.\ sequences is proved by virtue of the tightness of Borel probability measures. It is shown that, under the condition that a countable convergence-determining class exists, both independent identically distributed (i.i.d.) randomly sampled sequences and orbit sequences of ergodic transformations are  u.d.\ with full measure.
\end{abstract}

\bs

\no{\bf 2010 Mathematics Subject
Classification:} Primary 65D30, 11K36, secondary 11K45,28D05.

\bs

\no{\bf Key Words:} Uniform distribution of sequences, Polish  spaces, numerical integration, countable convergence-determining class, ergodic transformation.

%\newpage

\section{Introduction}\label{s1}

Let $(X,\tau)$ be a topological space and $\mathcal{B}(X)$ be the corresponding Borel $\sigma$-algebra generated by this topology. Let $\mu$ be a given measure on $(X, \mathcal{B}(X))$  and $f: X \to \mathbb{R}$ a given function. In numerical integration, one often aims to construct a sequence $\{x_n\}_{n=1}^{\infty}$ in $X$ such that:
\begin{equation}\label{eq 1.1}
\lim_{N \to \infty} \left| \frac{1}{N} \sum_{k=1}^{N} f(x_k) - \int_X f(x) d\mu \right| = 0.
\end{equation}
If the sequence $\{x_n\}_{n=1}^{\infty}$ satisfies \eqref{eq 1.1} for all bounded continuous functions $f$ on $X$, then $\{x_n\}_{n=1}^{\infty}$ is called a u.d. sequence on $X$ with respect to the measure $\mu$.

Uniformly distributed sequences form the essential nexus linking three fundamental methods in numerical integration: the Monte Carlo method, the quasi-Monte Carlo method, and the ergodic method. If $\mu$ is a probability measure, Monte Carlo methods obtain u.d.\ sequences by randomly generating i.i.d.\ random samples \cite{LimicNedzad2018}. More precisely, using the Strong Law of Large Numbers (LLN) and Weyl criterion, the sample paths $\{x_n\}_{n=1}^{\infty}$ of an i.i.d.\ random variable sequence $\{X_n\}_{n=1}^{\infty}$ are u.d.\ sequences with probability 1. The Central Limit Theorem can be used to prove that the Monte Carlo numerical integration method has a convergence rate of $O(1/\sqrt{N})$ in the sense of probability \cite{caflisch1998monte}, i.e.,
$$
\lim_{N \to \infty} \mu \left( \left| \frac{1}{N} \sum_{k=1}^{N} f(x_k) - \int_X f(x) d\mu \right| \leq \frac{x_{\alpha/2} \sigma(f)}{\sqrt{N}} \right) = 1 - \alpha.$$
Here, $N$ is the sample size,  $$\sigma(f) = \left[ \int_X f^2(x) d\mu - \left( \int_X f(x) d\mu \right)^2 \right]^{1/2}$$ is the standard deviation of $f$, $1 - \alpha$ ($\alpha \in (0, 1)$) is the confidence level, and $x_{\alpha/2}$ is the $\alpha/2$-quantile of the standard normal distribution. The advantage of the Monte Carlo method is that, theoretically, its convergence rate is independent of the dimension of the underlying space of the arguments of the integrand.
However, in practice, truly i.i.d.\ samples are computationally unattainable. Popular random number generators, including Linear Congruential Generators (LCG) \cite{HullDobell1962} and the Mersenne Twister \cite{Mersenne_Twister1998}, yield long-period pseudorandom sequences, which are only suitable for finite-dimensional numerical integration.

Quasi-Monte Carlo methods construct u.d.\ sequences with low-discrepancy through number-theoretic approaches. On the finite-dimensional unit cube $[0, 1]^d$ ($d \in \mathbb{N}$), the discrepancy of a finite sequence ${x_1, x_2, \dots, x_N}$ is evaluated by the star discrepancy $D_N^*$, where
$$D_N^* (x_1, x_2, \dots, x_N) = \sup_{a \in [0, 1)^d} \left| \frac{A([0, a); N)}{N} - \text{Vol}([0, a)) \right|.
$$
Here, $[0, a) = \prod_{k=1}^d [0, a_k)$ ($a = (a_1, a_2, \dots, a_d)^{\top} \in [0, 1)^d$), $A([0, a); N)=\sum_{n=1}^N \chi_{[0, a)}(x_n)$ with $ \chi_{[0, a)}$ being the characteristic function of $\chi_{[0, a)}$, and $\text{Vol}([0, a)) = \prod_{k=1}^d a_k$ is the volume of $[0, a)$. A sequence $\{x_n\}_{n=1}^{\infty}$ is called a low-discrepancy sequence if its star discrepancy is $O((\log N)^d / N)$ as $N \to \infty$ \cite{niederreiter1992random}.

For common low-discrepancy sequences, such as Sobol sequences \cite{sobol1967distribution}, Faure sequences \cite{faure1981discrepance}, Niederreiter sequences \cite{niederreiter1987point}, and Halton sequences \cite{Halton1960}, the decay order of their star discrepancy $D_N^*$ is $O((\log N)^d / N)$, which exhibits better uniformity than pseudorandom numbers in Monte Carlo methods \cite{caflisch1998monte}. By Koksma-Hlawka inequality \cite{KoksmaHlawka1962}, the numerical integration error of the Quasi-Monte Carlo method satisfies
$$\left| \frac{1}{N} \sum_{k=1}^N f(x_k) - \int_{[0, 1]^d} f(x) dx \right| \leq \text{Var}(f) \cdot D_N^* (x_1, \dots, x_N) = O\left( \frac{(\log N)^d}{N} \right).
$$
Here, $\text{Var}(f)$ is the total variation of $f$. Provided that the integrand $f$ is smooth enough, the convergence order of numerical integration can be further improved by optimally designing the construction strategy for u.d.\ sequences \cite{dick2008arbitrary}. A comprehensive review of the basic theory and construction methodologies for low-discrepancy sequences can be found in \cite{Kritzer2014book,niederreiter2017resent}. Notably, existing low-discrepancy sequences constructed via number-theoretic approaches  exhibit dimension-dependent convergence rates, making them  suitable only for moderate-dimensional cases; furthermore, their construction methodologies cannot be straightforwardly extended to infinite-dimensional spaces.

The ergodic method generates u.d.\ sequences via orbits of ergodic transformations, with its theoretical foundation being Birkhoff's Ergodic Theorem \cite{BirkhoffKoopman1932}. Specifically, for a probability measure space $(X, \mathcal{B}(X), \mu)$ ($\mu(X) = 1$), if there exists an ergodic transformation $T: X \to X$, then by appropriately choosing an initial point $x \in X_0$, the orbit sequence $\{T^n(x)\}_{n=0}^{\infty}$ of $T$ is a $\mu$-u.d.\ sequence. Under appropriate conditions, the Birkhoff Ergodic Theorem and Weyl criterion can be used to prove that the set of all such initial points has full measure. Deterministic trajectory sequences generated by ergodic transformations avoid the correlation artifacts inherent to pseudorandom number generators in Monte Carlo integration, thus exhibiting significantly improved numerical stability compared to conventional Monte Carlo schemes. In contrast to quasi-Monte Carlo methods, ergodic approaches impose much weaker structural constraints on the underlying space. From a theoretical standpoint, the only fundamental requirement for the ergodic method is the existence of an ergodic transformation on the underlying measure space.

A limitation of the ergodic method lies in the uncertainty of its convergence rate. In fact, although Birkhoff's Ergodic Theorem guarantees the convergence of numerical integration using orbit sequences of ergodic transformations, the convergence rate depends on the structural characteristics of the ergodic transformation and the properties of the integrand \cite{kachurovskii2016estimates}. Under suitable structural assumptions on the underlying spaces, certain low-discrepancy sequences employed in quasi-Monte Carlo methods can be interpreted as orbit sequences generated by ergodic transformations \cite{GrabnerHellekalek2012}, enabling the discrepancy of these orbit sequences to achieve $O((\log N)^d / N)$. However, for other ergodic transformations, the convergence rate slows down significantly \cite{bayart2020fast,kachurovskii1996rate,kachurovskii2016estimates}. In  \cite{bayart2020fast}, the authors proved that there exist an ergodic transformation $T$ and a continuous function $f$ such that the convergence rate of the time average $\frac{1}{N} \sum_{n=0}^{N-1} f(T^n(x))$ can be arbitrarily slow. This conclusion implies that the ergodic method lacks a unified guarantee for its convergence rate, requiring separate analysis for specific transformations and functions, which significantly restricts its applicability in practical computational scenarios. In \cite{DasSaikiel2017,DasYorke2018,tong2024exponential}, it is shown that under appropriate regularity assumptions on the integrand, the weighted Birkhoff averages can theoretically achieve arbitrary polynomial convergence rates or even exponential convergence rates.

The theory of uniform distribution originated from the study of u.d.\ sequences modulo 1 \cite{weyl1916gleichverteilung}. The famous Weyl criterion provides a fundamental tool for determining whether a sequence is u.d.\ It reduces the convergence test over all bounded continuous functions on the interval $[0, 1]$ to a convergence test over a countable set of special bounded continuous functions(the normalized complex exponential basis of Fourier series). Following Weyl's work, the theory of uniform distribution has been generalized  in multiple directions, such as the theory of uniform distribution on compact Hausdorff spaces \cite{hlawka1956folgen, hlawka1958folgen}, compact groups \cite{Eckmann1943,helmberg1958theorem,veech1971some,Losert1978existence}, locally compact groups \cite{Benzinger1974I,Benzinger1974II,LosertRindler1978,Rubel1965LocallyComp}, and the theory of uniform distribution with respect to signed measures on compact spaces \cite{mercourakis2023uniform}. For a comprehensive overview of the classical theory of u.d.\ sequences, we refer the readers to the monograph by Kuipers and Niederreiter \cite{kuipersNiederreiter1974}.

In physics, engineering, statistics, finance, and other fields, there are numerous examples requiring numerical integration on infinite-dimensional topological linear spaces. As elaborated above, the central objective of numerical integration is the construction of u.d.\ sequences characterized by low-discrepancy. Therefore, investigating the existence, characterization, discrepancy, and construction methods of u.d.\ sequences on infinite-dimensional topological linear spaces is of significant theoretical and practical importance. However, to the best of our knowledge, a theoretical framework for uniform distribution on infinite-dimensional spaces has not yet been established, leading to a lack of unified theoretical analysis and efficient numerical implementation methods for numerical integration on infinite-dimensional spaces. Although some scholars have attempted to explore numerical integration methods in infinite-dimensional spaces, see for instance, \cite{dick2013high,pantsulaia2016book}, these efforts are limited to special cases such as the Hilbert cube $[0, 1]^{\infty}$ and adopt finite-dimensional approximation strategies, failing to achieve a fundamental breakthrough beyond the existing theoretical framework for finite-dimensional spaces.

One main obstacle to studying u.d.\ sequences on infinite-dimensional spaces is the lack of compactness in such spaces. The key role of compactness is to ensure the existence of a countable convergence-determining class, enabling uniform distribution verification via convergence tests on the class. This embodies the central principle of Weyl  criterion and serves as an indispensable tool for investigating the u.d.\ sequences. On compact Hausdorff spaces, the existence of a countable convergence-determining class can be easily proven using the Stone-Weierstrass theorem. However, on non-compact spaces, the Stone-Weierstrass theorem fails, making the study of the existence of countable convergence-determining classes and related theory of u.d.\ sequences inherently challenging.

To address the difficulties in the theory of uniform distribution on infinite-dimensional spaces, this paper systematically establishes a theoretical framework for u.d.\ sequences on general non-compact topological spaces. First, we discuss convergence-determining classes and the equivalent characterizations of $\mu$-u.d.\ sequences on $T_4$ spaces. Second, we prove the existence of a countable convergence-determining class for u.d.\ sequences on Polish spaces. Based on this conclusion, we demonstrate that, in the sense of full measure, random sampling sequences and orbits of ergodic transformations are u.d.\ sequences. These results not only confirm the existence and construction of u.d.\ sequences on general non-compact topological spaces but also further support theoretical research into the discrepancy of u.d.\ sequences and into numerical integration methods in infinite-dimensional topological linear spaces.

The rest of this paper is organized as follows. In Section 2, we present some basic concepts and results required for the subsequent sections. In Section 3, we discuss the convergence-determining class and the equivalent characterizations of $\mu$-u.d.\ sequences on $T_4$ spaces. In Chapter 4, we study the existence of a countable convergence-determining class for u.d.\ sequences on Polish spaces, and based on this, discuss the practicability of constructing  u.d.\ sequences by random sampling method and ergodic transformation method on Polish spaces. Finally, in Chapter 5, we provide concluding remarks and propose several topics for future research based on the results of this paper.

\section{Preliminaries}
	In this section, we collect some basic notions and results from topology, measure theory, and theory of uniform distribution. For more details we refer the readers to \cite{Durrett2019Prob,Kechris1995,kuipersNiederreiter1974,Munkres2000Topology}.

Let $(X,\tau)$ be a topological space with topology $\tau$. If $\tau$ is metrizable, we let  $d$  denote the corresponding metric of $\tau$. $(X, \tau)$ is {\em $T_1$} if, for any distinct points $x, y \in X$, there exist open sets $U, V \in \tau$ such that $x \in U\setminus V $,  $y \in V\setminus U$; it is  {\em Hausdorff (or $T_2$)} if for any distinct points $x, y \in X$ there exist disjoint open sets $U, V \in \tau$ such that $x \in U$ and $y \in V$; it is \emph{normal} if any disjoint closed sets $A, B \subseteq X$, there exist disjoint open sets $U, V \in \tau$ such that $A \subseteq U$, $B \subseteq V$. A normal space that is also $T_1$ is called a \emph{$T_4$ space}. $(X, \tau)$  is a Polish space if its topology $\tau$ can be induced by a complete metric on $X$ and  contains a countable dense subset.

We denote by  $b(X)$ the set of bounded real-valued Borel measurable functions on $X$, equipped with the supremum norm $\|f\|_\infty = \sup_{x \in X} |f(x)|$, making $b(X)$ a Banach space, and even a Banach algebra with usual algebraic operations for functions. The subset $\mathcal{C}_b(X) \subseteq b(X)$ consists of all bounded continuous real-valued functions, forming a Banach subalgebra.

\begin{lemma}(Urysohn's Lemma, \cite[Theorem 33.1]{Munkres2000Topology})\label{lem:urysohn}
If $(X, \tau)$ is a normal space, then, for any disjoint closed sets $E, F \subseteq X$, there exists a continuous function $h: X \to [0,1]$ such that $h(x) = 0$ on $E$ and $h(x) = 1$ on $F$.
\end{lemma}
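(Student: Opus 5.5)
The standard route is to build $h$ from a family of open sets indexed by the dyadic rationals in $[0,1]$, nested the right way, and then read off $h$ as an infimum. The only property of normality needed is its reformulation in terms of closures: if $C$ is closed, $U$ is open and $C\subseteq U$, then there is an open $V$ with
$$C\subseteq V\subseteq \overline{V}\subseteq U.$$
To see this, apply normality to the disjoint closed sets $C$ and $X\setminus U$. This gives disjoint open sets $V\supseteq C$ and $W\supseteq X\setminus U$. Then $V\subseteq X\setminus W$, which is closed, so $\overline{V}\subseteq X\setminus W\subseteq U$.

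Let $D$ be the set of dyadic rationals in $[0,1]$. I will construct open sets $U_r$, $r\in D$, with $E\subseteq U_0$, $U_1=X\setminus F$, and $\overline{U_r}\subseteq U_s$ whenever $r<s$.

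\textbf{Base step.} Set $U_1=X\setminus F$, which is open and contains the closed set $E$. Apply the reformulation to $E\subseteq U_1$ to obtain $U_0$ with $E\subseteq U_0$ and $\overline{U_0}\subseteq U_1$.

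\textbf{Inductive step.} Suppose $U_r$ has been built for all $r$ with denominator $2^n$. A new point at level $n+1$ is $r=(2k+1)/2^{n+1}$. Its neighbours at level $n$ are $a=k/2^n$ and $b=(k+1)/2^n$. Apply the reformulation to $\overline{U_a}\subseteq U_b$ to get $U_r$ with
$$\overline{U_a}\subseteq U_r\subseteq\overline{U_r}\subseteq U_b.$$
The nesting condition for arbitrary $r<s$ in $D$ follows by transitivity through these adjacent comparisons.

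\textbf{Definition of $h$.} Also set $U_r=X$ for dyadic $r>1$. Define
$$h(x)=\inf\{r: x\in U_r\}.$$
Then $h$ takes values in $[0,1]$. On $E\subseteq U_0$ we get $h=0$. A point of $F$ lies in no $U_r$ with $r\le1$, so $h=1$ on $F$.

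\textbf{Continuity.} This is the main point to verify. Nesting gives two implications:
\begin{itemize}
\item if $x\in\overline{U_r}$ then $h(x)\le r$, since $x\in U_s$ for every $s>r$;
\item if $x\notin U_r$ then $h(x)\ge r$.
\end{itemize}
Now fix $x_0$ and an interval $(c,d)\ni h(x_0)$. By density of the dyadics, choose dyadics $p,q$ with $c<p<h(x_0)<q<d$. Let $O=U_q\setminus\overline{U_p}$, which is open.

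Since $h(x_0)<q$, there is $s<q$ with $x_0\in U_s\subseteq U_q$. Since $h(x_0)>p$, the first implication (in contrapositive form) gives $x_0\notin\overline{U_p}$. So $x_0\in O$.

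For any $y\in O$: from $y\in U_q\subseteq\overline{U_q}$ we get $h(y)\le q<d$, and from $y\notin U_p$ we get $h(y)\ge p>c$. Hence $h(O)\subseteq(c,d)$.

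Care is needed at the endpoints $h(x_0)\in\{0,1\}$. Using $U_r=X$ for $r>1$ handles the upper end, and the preimages of $[0,d)$ and $(c,1]$ can be treated the same way. This shows $h$ is continuous.
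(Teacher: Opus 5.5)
Your proof is correct and is essentially the standard argument of the cited source (Munkres, Theorem 33.1), which the paper quotes without proof: nested open sets $U_r$ with $\overline{U_r}\subseteq U_s$ for $r<s$, the function $h(x)=\inf\{r : x\in U_r\}$, and continuity via the open sets $U_q\setminus\overline{U_p}$. The only differences are cosmetic. You index by dyadic rationals built level by level rather than by an enumeration of all rationals in $[0,1]$. You also treat the endpoint $h(x_0)=0$ separately (taking $O=U_q$), whereas Munkres sets $U_p=\emptyset$ for $p<0$ so that the general argument already covers it.
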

	
\begin{lemma}(Tietze Extension Theorem,
\cite[Theorem 35.1]{Munkres2000Topology})\label{lem:tietze} Let $(X, \tau)$ be a normal space and $A \subseteq X$ a closed subset. Any continuous function of $A$ into the closed interval $[a,b]\subset R$ may be extended to a continuous function of all $X$ into $[a,b]$.
\end{lemma}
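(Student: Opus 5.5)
The plan is to derive the statement from Urysohn's Lemma (Lemma \ref{lem:urysohn}), since that is what normality is for here. First I reduce to $[a,b]=[-1,1]$ by the affine change $t\mapsto \frac{2t-(a+b)}{b-a}$; the degenerate case $a=b$ is trivial. So let $f:A\to[-1,1]$ be continuous with $A$ closed. The extension will be built as a uniformly convergent series of Urysohn functions.

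\emph{Approximation step.} Let $g:A\to[-r,r]$ be continuous with $r>0$. Put
\[
E=g^{-1}([-r,-r/3]),\qquad F=g^{-1}([r/3,r]).
\]
Both sets are closed in $A$, hence closed in $X$ because $A$ is closed, and they are disjoint. Lemma \ref{lem:urysohn} gives a continuous $h:X\to[0,1]$ with $h=0$ on $E$ and $h=1$ on $F$. Define $\phi=\frac{2r}{3}h-\frac r3$. Then $\phi$ is continuous on $X$, satisfies $|\phi|\le r/3$, and a check on the three pieces $E$, $F$ and the middle region gives $|g-\phi|\le 2r/3$ on $A$.

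\emph{Iteration.} Apply the approximation step to $g_0=f$ with $r=1$ to get $\phi_1$. Then apply it to $g_1=f-\phi_1|_A$ with $r=2/3$ to get $\phi_2$, and continue. By induction, $|\phi_n|\le \frac13(2/3)^{n-1}$ on $X$ and $|f-\sum_{k\le n}\phi_k|\le (2/3)^n$ on $A$. The series $F=\sum\phi_k$ therefore converges uniformly on $X$. The uniform limit of continuous functions is continuous, $F=f$ on $A$, and $|F|\le \frac13\sum(2/3)^{k-1}=1$. So $F$ maps $X$ into $[-1,1]$, and undoing the affine change gives the extension into $[a,b]$.

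The main obstacle is keeping the range inside the \emph{closed} interval. The bound $|F|\le 1$ comes out exactly from the geometric sum, so nothing further is needed. The only other subtle point is the closedness of $E$ and $F$ in $X$, which is where the hypothesis that $A$ is closed is used.
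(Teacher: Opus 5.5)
Your proof is correct and is essentially the standard argument of \cite[Theorem 35.1]{Munkres2000Topology}, which the paper cites rather than proves. It reduces to $[-1,1]$, approximates within $2r/3$ by a Urysohn function rescaled into $[-r/3,r/3]$, and sums the resulting series, whose bound $\frac13\sum_{k\ge1}(2/3)^{k-1}=1$ keeps the extension in the closed interval; this is the same thirds-iteration the paper uses in its proof of Lemma~\ref{normal_stone_weierstrass} (minor point: you use the letter $F$ both for a closed set and for the final extension).
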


The following result follows from the Stone-Weierstrass Theorem (See, for instance \cite[Theorem 44.5]{Willard1970topology}).

\begin{lemma}(\cite[P. 245]{Kelley1955})\label{lem:separable_Cb}
Let $K$ be a compact separable metric space. Then, the Banach space $\mathcal{C}_b(K)$ is separable.
\end{lemma}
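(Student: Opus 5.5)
The plan is to use the Stone--Weierstrass theorem for the compact Hausdorff space $K$. The goal is a countable subset of $\mathcal{C}_b(K)=\mathcal{C}(K)$ that is dense in the supremum norm. Since $K$ is compact, every continuous function is bounded, so $\mathcal{C}_b(K)$ is just $\mathcal{C}(K)$ with the sup norm.

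First, fix a countable dense subset $\{y_n\}_{n\ge1}$ of $K$; this exists because $K$ is separable, and in fact compact metric spaces are automatically separable. Let $d$ be the metric on $K$. Define $g_n(x)=d(x,y_n)$. Each $g_n$ is continuous and real-valued on $K$. The family $\{g_n\}$ separates points. Indeed, if $x\neq z$, set $\delta=d(x,z)>0$ and choose $y_n$ with $d(x,y_n)<\delta/2$. Then $g_n(x)<\delta/2$, while the triangle inequality gives $g_n(z)\ge d(x,z)-d(x,y_n)>\delta/2$, so $g_n(x)\neq g_n(z)$.

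Next, let $\mathcal{A}$ be the set of all polynomials with rational coefficients in finitely many of the $g_n$, including the constants. This set is countable, because it is a countable union over finite index sets and degrees of countable sets. Let $\overline{\mathcal{A}}$ be its sup-norm closure. It coincides with the closure of the real unital algebra generated by $\{g_n\}$, since rational coefficients approximate real ones uniformly: each monomial is bounded on the compact set $K$. That real algebra contains the constants and separates points. By the Stone--Weierstrass theorem it is dense in $\mathcal{C}(K)$, so $\mathcal{A}$ is a countable dense subset of $\mathcal{C}_b(K)$, and $\mathcal{C}_b(K)$ is separable.

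The only delicate step is the rational-coefficient reduction. It is routine: given a real polynomial $p$ in $g_{n_1},\dots,g_{n_k}$, replace each coefficient by a rational one within $\varepsilon/(M\cdot m)$. Here $m$ is the number of monomials and $M$ bounds all of them on $K$; such an $M$ exists because each $g_n\le \operatorname{diam}K<\infty$. Everything else is a direct application of the Stone--Weierstrass theorem.
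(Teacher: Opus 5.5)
Your proof is correct and takes the same route the paper indicates. The paper gives no argument beyond citing Kelley and remarking that the lemma follows from the Stone--Weierstrass theorem, and your construction carries that out completely: the distance functions $d(\cdot,y_n)$ to a countable dense set separate points, and the rational-coefficient polynomials in them form a countable set whose closure is the dense unital subalgebra.
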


\begin{definition}(\cite[P. 211]{Munkres2000Topology})\label{def:separation_closed_sets}
Let $\mathcal{V}$ be a subalgebra of $\mathcal{C}_b(X)$. We say that $\mathcal{V}$ separates closed sets if for any two disjoint closed sets $A, B \subset X$, there exists a function $h \in \mathcal{V}$ such that $0 \leq h(x) \leq 1$ for any $x\in X$, and $h(x)= 0$ on $A$, and $h(x)= 1$ on $B$.
\end{definition}

By the Urysohn Lemma, when $(X,\tau)$ is a normal space, for any  pair of disjoint closed sets $A, B \subset X$, there exists a continuous mapping $h:X\to [0,1]$ that separates $A$ and $B$. Consequently, when $\mathcal{V}$ contains all such Urysohn functions, $\mathcal{V}$ separates closed sets in $(X,\tau)$.

Denote by $\mathcal{B}(X)$ the Borel $\sigma$-algebra on $X$ and by $\mu$ a regular, non-atomic probability measure on $(X, \mathcal{B}(X))$. For sufficient topological conditions on spaces that guarantee the existence of non-trivial regular Borel measures, see \cite{Gardner1975}.

A subset $E \subseteq X$ is a  $G_\delta$ set  if it is the intersection of countably many open sets, i.e., $E = \bigcap_{n=1}^\infty U_n$ for some $\{U_n\}_{n=1}^\infty \subseteq \tau$.   By the  regularity of $\mu$, we have, for every $E \in \mathcal{B}(X)$,
$$\mu(E) = \sup \{ \mu(C)  \,|\,  C \subseteq E, C \text{ closed} \} = \inf \{ \mu(D) \,|\, E \subseteq D, D \text{ open} \}.$$
A Borel set $M \in \mathcal{B}(X)$ is called a \emph{$\mu$-continuity set} if $\mu(\partial M) = 0$, where $\partial M = \overline{M} \setminus \text{int} M$ is the boundary of $M$, with  $\text{int} M$ and $\overline{M}$ representing the interior and closure of $M$, respectively.

\begin{example}\label{ex:mu-continuous set}
Suppose that $g: X \to [0,1]$ is a continuous function. For each $\alpha \in [0,1]$, let $G_\alpha = \{ x \in X : g(x) = \alpha \}$. Since
$X = \bigcup_{\alpha \in [0,1]} G_\alpha$, and $\mu$ is a finite measure, by \cite[Exercise 1.21, Ch. 3]{kuipersNiederreiter1974}, there are at most countably many $\alpha \in [0,1]$ such that $\mu(G_\alpha) > 0$. Consequently, there exists $\alpha_0 \in (0,1)$ with $\mu(G_{\alpha_{0}}) = 0$. Define $C = \{ x \in X : g(x) \geq \alpha_{0} \}$. The set $C$ is closed, and its boundary satisfies $\partial C \subseteq G_{\alpha_{0}}$, implying that $\mu(\partial C) = 0$ and $C$ is a $\mu$-continuity set.
\end{example}

\begin{lemma}\cite[Theorem 17.11]{Kechris1995}\label{lem:tight-prob}
If $(X,\tau)$ is a Polish space, then any  probability measure $\mu$ on $(X,\mathcal{B}(X))$ is tight, that is,  for any   $E \in \mathcal{B}(X)$,
$$\mu(E)=\sup \{\mu(K)\,|\, K \subseteq E, K \text{ compact}\}.$$
Especially, for  every $\epsilon > 0$, there exists a compact set $K \subset X$ such that $\mu(K) > 1 - \epsilon$.
\end{lemma}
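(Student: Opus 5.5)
The statement to prove is Lemma~\ref{lem:tight-prob}: every Borel probability measure $\mu$ on a Polish space $(X,\tau)$ is tight, i.e.\ inner regular with respect to compact sets. I would prove it in two stages. The first stage is the special case $E=X$: for every $\epsilon>0$ there is a compact $K$ with $\mu(K)>1-\epsilon$. The second stage combines this with inner regularity with respect to closed sets.

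For the first stage, let $d$ be a complete metric inducing $\tau$, and let $\{q_j\}_{j\ge1}$ be a countable dense subset. For each $n\ge1$ the closed balls $\overline{B}(q_j,1/n)$, $j\ge1$, cover $X$. By continuity of $\mu$ from below, there is $J_n$ such that
$$\mu\Big(\bigcup_{j\le J_n}\overline{B}(q_j,1/n)\Big)>1-\epsilon 2^{-n}.$$
Set $K=\bigcap_{n\ge1}\bigcup_{j\le J_n}\overline{B}(q_j,1/n)$. Then $K$ is closed, and $\mu(X\setminus K)\le\sum_n\epsilon2^{-n}=\epsilon$. Moreover, $K$ is totally bounded, since for each $n$ it is covered by finitely many balls of radius $1/n$. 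A closed subset of a complete metric space is complete, and a complete totally bounded metric space is compact, so $K$ is compact. To get strict inequality, run the argument with $\epsilon/2$.

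For the second stage, I first need that every Borel probability measure on a metric space is closed-regular:
$$\mu(E)=\sup\{\mu(C)\,|\,C\subseteq E,\ C\text{ closed}\}=\inf\{\mu(D)\,|\,E\subseteq D,\ D\text{ open}\}.$$
I would prove this with the standard good-sets argument. Let $\mathcal{G}$ be the class of Borel sets satisfying this two-sided approximation.
\begin{enumerate}[(i)]
\item $\mathcal{G}$ contains all closed sets. If $C$ is closed, the open sets $D_k=\{x\,|\,d(x,C)<1/k\}$ decrease to $C$, so $\mu(D_k)\to\mu(C)$.
\item $\mathcal{G}$ is closed under complements, because complements swap closed and open approximants.
\item $\mathcal{G}$ is closed under countable unions. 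Given $E=\bigcup_i E_i$ with $C_i\subseteq E_i\subseteq D_i$ and $\mu(D_i\setminus C_i)<\epsilon2^{-i}$, use the open set $\bigcup_i D_i$ from outside. From inside, use the closed set $\bigcup_{i\le m}C_i$ for $m$ large.
\end{enumerate}
Hence $\mathcal{G}$ is a $\sigma$-algebra containing the closed sets, so $\mathcal{G}=\mathcal{B}(X)$. In the paper's setting, regularity of $\mu$ is also assumed, so this step could simply be cited.

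Finally, given a Borel set $E$ and $\epsilon>0$, choose a closed $C\subseteq E$ with $\mu(E\setminus C)<\epsilon/2$. Choose a compact $K_0$ from the first stage with $\mu(X\setminus K_0)<\epsilon/2$. Then $K=C\cap K_0$ is a closed subset of a compact set, hence compact, and $K\subseteq E$. It satisfies $\mu(E\setminus K)<\epsilon$, which gives the stated supremum formula. The "especially" clause is exactly the first stage.

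The main obstacle is the first stage. It is the only place where completeness and separability are both genuinely used, and the key point is to arrange for the total-boundedness construction to produce a set that is closed and therefore complete.
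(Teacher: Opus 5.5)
Your proof is correct and follows essentially the same route as the source: the paper gives no proof of its own and simply cites Kechris, Theorem~17.11, whose standard (Ulam) proof also combines closed-set regularity of finite Borel measures on metrizable spaces with the closed, totally bounded set $K=\bigcap_n\bigcup_{j\le J_n}\overline{B}(q_j,1/n)$. Your final step, intersecting a closed inner approximant $C\subseteq E$ with $K_0$, is a standard and valid way to pass from $X$ to an arbitrary Borel set $E$.
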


Two measurable spaces $(Y, \Sigma)$ and $(Z,\Xi)$ are said to be
{\em isomorphic} if there is a bijection $\varphi: Y \to Z$ such that both $\varphi$ and $\varphi^{-1}$ are measurable, moreover, if both $\Sigma$ and $\Xi$ are Borel $\sigma$-algebras, we call $(Y, \Sigma)$ and $(Z,\Xi)$ Borel isomorphic.
A measurable space $(Y, \Sigma)$ is said to be {\em standard} if there
exists a complete, separable metric space $S$ such that $(Y, \Sigma)$ is isomorphic (as a measurable space) to $(S,\mathcal{B}(S))$ with $\mathcal{B}(S)$ being the Borel $\sigma$-algebra of $S$. A measure space $(Y, \Sigma, \nu)$ is said to be
standard if $(Y, \Sigma)$ is a standard measurable space.
	
\begin{lemma}(\cite[Theorem 17.41]{Kechris1995}) \label{lem:atomless_isomorphism}
Let $(Y, \Sigma, \nu)$ be a standard measure space with probability measure $\mu$ such that $\nu(\{x\}) = 0$ for all $x \in X$. Then there is a Borel isomorphism $\varphi:(Y, \Sigma)\to ([0, 1], \mathcal{B}([0,1])$ with $\mu\circ \varphi^{-1}=\lambda|_{[0, 1]}$,  where $\mu\circ \varphi^{-1}(B)=\mu(\varphi^{-1}(B))$  for any $B\in \mathcal{B}([0,1])$ and $\lambda$ is the one-dimensional Lebesgue measure.
\end{lemma}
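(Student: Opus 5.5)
We prove the statement by transporting the problem to the unit interval, where everything is explicit, and pulling the answer back. The statement is a standard result cited from Kechris (Theorem 17.41). Since $(Y,\Sigma)$ is standard, we may assume without loss of generality that $Y$ is a Polish space $S$ with its Borel $\sigma$-algebra and that $\nu$ is an atomless Borel probability measure on $S$. We then proceed in three steps: (i) reduce $S$ to a convenient model; (ii) construct a measure-preserving Borel map into $[0,1]$ through a distribution function; (iii) repair the map on a null set so that it becomes a genuine Borel isomorphism.

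\textbf{Step (i).} $S$ is uncountable, since a countable space carries no atomless probability measure. By the isomorphism theorem for uncountable standard Borel spaces, $S$ is Borel isomorphic to the Cantor space $2^{\mathbb{N}}$, and transporting $\nu$ keeps it atomless. So we may take $Y=2^{\mathbb{N}}$ with the lexicographic order.

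\textbf{Step (ii).} Define $F(y)=\nu(\{z: z<_{\rm lex} y\})$. This is monotone and Borel. Because $\nu$ is atomless, $F$ is continuous and $F_*\nu=\lambda|_{[0,1]}$: for each $t$, the set $\{F< t\}$ is an initial segment whose measure equals $t$, by continuity and atomlessness. The map $F$ fails to be injective only on the intervals where it is constant. Each such interval, and the set of its endpoints, has $\nu$-measure zero, and there are only countably many such intervals. Hence there is a $\nu$-null Borel set $N\subset Y$ such that $F$ is injective on $Y\setminus N$, and $F(N)$ is a Lebesgue-null set (it is countable up to a null image). By Lusin--Souslin, $F|_{Y\setminus N}$ is a Borel isomorphism onto its Borel image $B$, and $\lambda([0,1]\setminus B)=0$.

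\textbf{Step (iii).} We enlarge the exceptional sets to uncountable null Borel sets $N'\supseteq N$ in $Y$ and $M'\supseteq [0,1]\setminus F(Y\setminus N')$ in $[0,1]$. For $M'$, add a Cantor null set; for $N'$, add an uncountable compact $\nu$-null set, which exists because an atomless measure on $2^{\mathbb{N}}$ gives measure zero to some perfect set. Any two uncountable standard Borel spaces are Borel isomorphic, so we glue $F|_{Y\setminus N'}$ with a Borel isomorphism $N'\to M'$ to obtain $\varphi$. Since only null sets were altered, $\nu\circ\varphi^{-1}=\lambda$.

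The main obstacle is this null-set surgery: we must ensure that after enlarging, $F$ remains a bijection from $Y\setminus N'$ onto $[0,1]\setminus M'$. To handle it, we first choose $N'$ to be $F$-saturated, i.e.\ $N'=F^{-1}(F(N'))$, and then set $M'=F(N')\cup([0,1]\setminus F(Y))$. Saturation is what makes the two complements match exactly.
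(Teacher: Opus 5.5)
The paper gives no proof of this lemma; it only cites Kechris, Theorem 17.41. Your argument is correct and is essentially the standard proof there: reduce to $2^{\mathbb{N}}$, use the lexicographic distribution function $F$, discard the countably many non-singleton fibres, and exchange uncountable null sets via a Borel isomorphism.

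To tighten it, say explicitly that $F$ maps onto $[0,1]$, since the two ends of each lexicographic gap get the same value because $\nu$ is atomless. In Step (iii), the cleanest choice is $N'=F^{-1}(D\cup C)$, with $D$ the countable set of values having non-singleton fibres and $C$ a Lebesgue-null Cantor set. This set is saturated, $\nu$-null because $F_*\nu=\lambda$, and uncountable because $F$ is onto. So the extra perfect $\nu$-null set in $Y$ is unnecessary, and the two separate enlargements you describe are reconciled automatically.
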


\begin{lemma}(Cavalieri's Principle,
\cite[Corollary 2.2.34]{willem2023functional})\label{lem:layer_cake}
Let $(Y, \Sigma, \nu)$ be a measure space. For any integrable function $\phi: Y \to [0, \infty]$,
$$
\int_Y \phi \, d\nu = \int_0^\infty \nu(\{ y \in Y \mid \phi(y) > t \}) \, dt.
$$
\end{lemma}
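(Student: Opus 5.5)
This is the standard layer-cake (Cavalieri) identity, and I would prove it by writing $\phi$ as an integral of indicator functions and exchanging the order of integration with Tonelli's theorem.

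\emph{Step 1 (pointwise identity).} For each fixed $y\in Y$ with $\phi(y)\in[0,\infty]$ we have
$$\phi(y)=\int_0^\infty \chi_{\{t<\phi(y)\}}\,dt ,$$
since the integrand equals $1$ exactly for $t\in[0,\phi(y))$. This also holds when $\phi(y)=\infty$ and when $\phi(y)=0$.

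\emph{Step 2 (measurability).} The set
$$E=\{(y,t)\in Y\times[0,\infty)\,:\,\phi(y)>t\}$$
must be $\Sigma\otimes\mathcal{B}([0,\infty))$-measurable. I would write it as a countable union over rationals,
$$E=\bigcup_{q\in\mathbb{Q}_{\ge 0}}\{y:\phi(y)>q\}\times[0,q).$$
Each piece is a measurable rectangle because $\phi$ is $\Sigma$-measurable. The union is all of $E$: if $\phi(y)>t$, pick a rational $q$ with $t<q<\phi(y)$.

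\emph{Step 3 (Tonelli).} Integrate $\chi_E$ over $Y\times[0,\infty)$ with respect to $\nu\otimes\lambda$. Tonelli's theorem for nonnegative measurable functions lets us compute the integral in either order. Integrating in $t$ first gives $\int_Y\phi\,d\nu$ by Step 1. Integrating in $y$ first gives $\int_0^\infty \nu(\{\phi>t\})\,dt$.

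\emph{Main obstacle.} The only delicate point is that Tonelli requires $\sigma$-finiteness of $\nu$, while the lemma allows an arbitrary measure space.
\begin{itemize}
\item First I would use integrability of $\phi$: the set $\{\phi>0\}=\bigcup_n\{\phi>1/n\}$ is $\sigma$-finite, because $\nu(\{\phi>1/n\})\le n\int_Y\phi\,d\nu<\infty$ by Chebyshev's inequality.
\item Then I would restrict $\nu$ to $\{\phi>0\}$. Both sides are unchanged by this restriction, since $\{\phi>t\}\subseteq\{\phi>0\}$ for every $t\ge0$, and $\phi$ vanishes off $\{\phi>0\}$.
\item If one prefers to avoid Tonelli altogether, there is an alternative route: verify the identity for nonnegative simple functions by direct computation, then pass to general $\phi$ via monotone convergence applied on both sides, using $\{\phi_n>t\}\uparrow\{\phi>t\}$ when $\phi_n\uparrow\phi$.
\end{itemize}
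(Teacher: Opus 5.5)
Your proof is correct. The paper offers no proof to compare it with: it imports the lemma from Willem's textbook without argument, so your proposal supplies a self-contained justification of something the paper takes on citation.

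Of your two routes, the Tonelli (subgraph) argument is the standard one. Your reduction to $\{\phi>0\}$, which is $\sigma$-finite by Markov's inequality, is exactly where the integrability hypothesis is used; without it, the product-measure step would not be justified on a non-$\sigma$-finite space.

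Your alternative route via simple functions and monotone convergence is more elementary and strictly more general. It uses only continuity from below of $\nu$ and monotone convergence on $[0,\infty)$. It therefore proves the identity for every measurable $\phi\colon Y\to[0,\infty]$ on an arbitrary measure space, with neither integrability nor $\sigma$-finiteness needed.

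For the paper's purposes either version suffices. The lemma is used only in the proof of (iv)$\Rightarrow$(i) of Theorem~\ref{thm:uniform_distribution_equivalence}. There it is applied to $f^{\pm}$, for bounded continuous $f$, with respect to the finite measures $\mu$ and $\nu_N$. In that setting all quantities are finite and $\sigma$-finiteness is automatic, so the obstacle you flagged never arises.
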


Let $(Y, \Sigma, \nu)$ be a measure space and $T: Y \to Y$ a measurable transformation. We say that $T$ is a {\em measure-preserving transformation} if $\nu(T^{-1}(A)) = \nu(A)$ for every $A \in \Sigma$.
Measure-preserving transformation $T: Y \to Y$ is said to be {\em ergodic} if, for every $A \in \Sigma$, $T^{-1}(A) = A$ implies  $\nu(A) = 0$ or $\nu(A) = 1$.

\begin{lemma}(Birkhoff Ergodic Theorem, \cite[Theorem 1.1.4]{walters1982})\label{lem:birkhoff}
Let $(Y, \Sigma, \nu)$ be a measure space with $\nu(X) = 1$, and $T: Y \to Y$ a measure-preserving transformation . If $T$ is ergodic, then for every $f \in L^1(Y, \Sigma, \nu)$,
$$\lim_{N \to \infty} \frac{1}{N} \sum_{n=0}^{N-1} f(T^n(y)) = \int_Y f \, d\nu$$
for $\nu$-a.e. $y \in Y$.
\end{lemma}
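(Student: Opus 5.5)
The statement here is the Birkhoff Ergodic Theorem (Lemma \ref{lem:birkhoff}), which the paper quotes from \cite{walters1982}. The plan is the classical route via the maximal ergodic theorem, followed by an identification of the limit using ergodicity.

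\textbf{Step 1: maximal ergodic inequality.} Write $S_n f=\sum_{k=0}^{n-1}f\circ T^k$ and $M_N f=\max_{1\le n\le N}S_n f$. I will show that
\[
\int_{\{\sup_N M_N f>0\}} f\,d\nu\ \ge 0 .
\]
Garsia's argument does this. Let $F_N=\max(0,M_Nf)$. Then $f+F_N\circ T\ge M_N f$ pointwise, so $f\ge F_N-F_N\circ T$ on $\{F_N>0\}$. Integrating and using $\int F_N\circ T\,d\nu=\int F_N\,d\nu$ (measure preservation) gives the inequality on $\{M_Nf>0\}$. Let $N\to\infty$ by monotone convergence.

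\textbf{Step 2: a.e. convergence.} Set $\bar f=\limsup_n S_nf/n$ and $\underline f=\liminf_n S_nf/n$. Both are $T$-invariant, since $S_{n+1}f=f+S_nf\circ T$ and $f/n\to0$ a.e. for integrable $f$.

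For rationals $\alpha<\beta$, the set $E_{\alpha,\beta}=\{\underline f<\alpha,\ \bar f>\beta\}$ is invariant. Applying Step 1 to $(f-\beta)\chi_{E}$ and to $(\alpha-f)\chi_E$, and noting that the maximal set for each contains $E$ by invariance, gives
\[
\beta\,\nu(E)\le\int_E f\,d\nu\le\alpha\,\nu(E).
\]
Hence $\nu(E)=0$. A countable union over rational pairs shows $S_nf/n$ converges a.e. to some $f^*$.

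\textbf{Step 3: integrability and the value of the limit.} Fatou's lemma gives $\int|f^*|\,d\nu\le\|f\|_1$. For bounded $f$, dominated convergence gives $\int f^*\,d\nu=\int f\,d\nu$. For general $f\in L^1$, approximate by bounded $g$. The averages are $L^1$-contractive, since $\|S_nh/n\|_1\le\|h\|_1$, and passing to the limit via Fatou gives $\|f^*-g^*\|_1\le\|f-g\|_1$. It follows that $\int f^*\,d\nu=\int f\,d\nu$.

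Finally, $f^*$ is $T$-invariant. Its sublevel sets $\{f^*>c\}$ are invariant, so by ergodicity each has measure $0$ or $1$. Hence $f^*$ is a.e. constant, and the constant must equal $\int f\,d\nu$.

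\textbf{Expected obstacle.} The main obstacle is Step 1, specifically getting the pointwise inequality $f\ge F_N-F_N\circ T$ on the right set. A second subtlety is in Step 3: the invariance used there is only a.e. invariance ($T^{-1}A=A$ up to null sets), while the paper's definition of ergodicity is stated for exactly invariant sets. To bridge this, I will replace $f^*$ by $\limsup S_nf/n$, which is exactly invariant everywhere.
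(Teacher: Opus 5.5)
Your argument is correct. The paper gives no proof of this lemma and simply quotes it from Walters; your route (Garsia's maximal inequality, the invariant sets $E_{\alpha,\beta}$, Fatou for integrability and $\int f^*\,d\nu=\int f\,d\nu$, then ergodicity applied to the everywhere-invariant $\limsup_n S_nf/n$) is essentially the standard proof in that reference, with one small fix: letting $N\to\infty$ in $\int_{\{M_Nf>0\}}f\,d\nu\ge 0$ needs dominated convergence (or countable additivity of $A\mapsto\int_A f\,d\nu$), not monotone convergence, because $f$ is signed.
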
	
	
\begin{lemma}(Strong LLN, \cite[Theorem 2.4.1]{Durrett2019Prob}) \label{lem:strong_law}
Let $(\Omega, \mathcal{F}, P)$ be a probability space and $\{X_n\}_{n=1}^\infty$ a sequence of i.i.d.\ random variables with $\mathbb{E}|X_1|=\int_{\Omega}|X_1|dP < \infty$. Then,
$$  \lim_{n \to \infty} \frac{1}{n} \sum_{k=1}^n X_k = \mathbb{E} X_1, a.s.$$
\end{lemma}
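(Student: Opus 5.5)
This statement is the Strong Law of Large Numbers (Lemma \ref{lem:strong_law}), which the paper quotes from \cite[Theorem 2.4.1]{Durrett2019Prob}. The plan below follows Etemadi's proof, which needs only pairwise independence together with the identical distribution.

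\emph{Reduction to nonnegative variables.} Write $X_k = X_k^+ - X_k^-$. The sequences $\{X_k^+\}$ and $\{X_k^-\}$ are each i.i.d. and integrable, so by linearity it suffices to treat $X_k \ge 0$.

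\emph{Truncation.} Set $Y_k = X_k \chi_{\{X_k \le k\}}$. Since $X_1 \ge 0$ is integrable, Cavalieri's principle (Lemma \ref{lem:layer_cake}) gives
\[
\sum_k P(X_k > k) \le \int_0^\infty P(X_1 > t)\,dt = \mathbb{E}X_1 < \infty .
\]
By Borel--Cantelli, $X_k = Y_k$ for all but finitely many $k$ almost surely. Hence it suffices to prove $T_n/n \to \mathbb{E}X_1$ a.s., where $T_n = \sum_{k\le n} Y_k$. Dominated convergence gives $\mathbb{E}Y_k \to \mathbb{E}X_1$, and Cesàro averaging then gives $\mathbb{E}T_n/n \to \mathbb{E}X_1$.

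\emph{Convergence along a geometric subsequence.} Fix $\alpha > 1$ and set $k(n) = \lfloor \alpha^n \rfloor$. By Chebyshev's inequality and pairwise independence,
\[
\sum_n P\bigl(|T_{k(n)} - \mathbb{E}T_{k(n)}| > \epsilon k(n)\bigr) \le \epsilon^{-2} \sum_n k(n)^{-2} \sum_{m \le k(n)} \mathrm{Var}(Y_m) \le C \epsilon^{-2} \sum_m \mathbb{E}Y_m^2 / m^2 .
\]
The main obstacle is to show that this last sum is finite. Write
\[
\mathbb{E}Y_m^2 = \int_0^m 2y\, P(X_1 > y)\,dy .
\]
Interchanging sum and integral, it suffices to bound $\sum_{m > y} 2y/m^2$ uniformly in $y$. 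This sum is at most a constant: for $y \le 1$ one uses $\sum_m 1/m^2 < \infty$, and for $y > 1$ the integral comparison $\sum_{m>y} 1/m^2 \le 2/y$. The sum is therefore at most $C\,\mathbb{E}X_1 < \infty$. Borel--Cantelli then yields $T_{k(n)}/k(n) \to \mathbb{E}X_1$ a.s.

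\emph{Filling the gaps.} Because $Y_k \ge 0$, $T_n$ is monotone in $n$. For $k(n) \le m < k(n+1)$ this gives
\[
\frac{T_{k(n)}}{k(n+1)} \le \frac{T_m}{m} \le \frac{T_{k(n+1)}}{k(n)} .
\]
Since $k(n+1)/k(n) \to \alpha$, almost surely
\[
\alpha^{-1}\mathbb{E}X_1 \le \liminf_m \frac{T_m}{m} \le \limsup_m \frac{T_m}{m} \le \alpha\,\mathbb{E}X_1 .
\]
Letting $\alpha \downarrow 1$ along a countable sequence concludes the proof.
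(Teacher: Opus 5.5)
Your proposal is correct and follows the same approach as the paper's source: the paper gives no proof of its own but cites Durrett's Theorem 2.4.1, whose proof is Etemadi's argument (truncation, Chebyshev along $\lfloor\alpha^n\rfloor$, then a monotone sandwich), exactly as you reproduce it. Two small fixes: since $\mathbb{E}Y_m^2=\int_0^m 2y\,P(y<X_1\le m)\,dy$, your displayed identity should be the inequality $\mathbb{E}Y_m^2\le\int_0^m 2y\,P(X_1>y)\,dy$ (which is all you use), and the bound $\sum_{n:\,k(n)\ge m}k(n)^{-2}\le C_\alpha m^{-2}$ hidden in your constant $C$ deserves a line, e.g.\ via $\lfloor\alpha^n\rfloor\ge\alpha^n/2$ and a geometric series.
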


\section{Characterizations of u.d.\ sequences on  $T_4$  spaces}

In this section, we always assume that $(X,\tau)$ is a $T_4$ space  and $\mu$ is a regular, non-atomic probability measure on $(X, \mathcal{B}(X))$.
Analogous to the case of compact spaces (see, for instance, \cite{kuipersNiederreiter1974}), in this section we present the definition and equivalent characterizations of u.d.\ sequences on $(X,\mathcal{B}(X),\mu)$.

\begin{definition}
A sequence $\{x_n\}_{n=1}^\infty$ in $X$ is  $\mu$-u.d.\  ($\mu$-u.d.\) if, for every $f \in \mathcal{C}_b(X)$,
\begin{equation*}
\lim_{N \to \infty} \frac{1}{N} \sum_{n=1}^N f(x_n) = \int_X f \, d\mu.
\end{equation*}

  A class $\mathcal{V} \subseteq b(X)$ is  convergence-determining  with respect to $\mu$ if the condition
$$\lim_{N \to \infty} \frac{1}{N} \sum_{n=1}^N f(x_n) = \int_X f \, d\mu \quad \text{for all } f \in \mathcal{V}$$
implies $\{x_n\}_{n=1}^\infty$ is $\mu$-u.d.\
\end{definition}

Let $ \mathcal{V} \subseteq b(X) $. We denote by $\text{sp}(\mathcal{V})$ the linear subspace of $b(X)$ generated by $\mathcal{V}$, and $\overline{\text{sp}(\mathcal{V})}$ its closure under $\|\cdot\|_\infty$.	  By
\cite[Theorem 1.1, Ch.3]{kuipersNiederreiter1974}, if $ \mathcal{C}_b(X) \subseteq \overline{\text{sp}(\mathcal{V})} $, then $ \mathcal{V} $ is a convergence-determining class with respect to $ \mu $. When the space $X$ is compact, by the classical Stone-Weierstrass theorem(\cite[Theorem 44.5]{Willard1970topology}), a subalgebra $\mathcal{V}$ of $\mathcal{C}_b(X)$ is a convergence-determining class with respect to $ \mu $ if $\mathcal{V}$ contains the constant functions and separates points in $X$, a detailed proof can be found in
\cite[Corollary 1.1, Ch. 3]{kuipersNiederreiter1974}.
However, in non-compact spaces, separate  points in $X$ is not enough to ensure that $\mathcal{V}$   is a convergence-determining class. Instead, the following result says that the condition of separating closed sets (see Definition \ref{def:separation_closed_sets}) can be used to ensure that $\mathcal{V}$  is a convergence-determining class. It is a direct corollary of \cite[Problem 44A]{Willard1970topology} and
\cite[Theorem 1.1, Ch. 3]{kuipersNiederreiter1974}. For the reader's convenience, we provide a self-consistent proof in what follows.

\begin{lemma}\label{normal_stone_weierstrass}
Let $ \mathcal{V} $ be a class of functions from $\mathcal{C}_b(X)$. If $\mbox{sp}(\mathcal{V})$ contains the constant functions and separates closed sets. Then $ \mathcal{V}$  is a convergence-determining class with respect to $\mu$ on $X$.
\end{lemma}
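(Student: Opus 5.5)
The plan is to avoid any uniform approximation of $\mathcal{C}_b(X)$ by $\mbox{sp}(\mathcal{V})$, which is unavailable here because $X$ need not be compact. Instead I would argue measure-theoretically, via the empirical measures $\nu_N=\frac1N\sum_{n=1}^N\delta_{x_n}$ and a portmanteau-type argument. Fix a sequence with $\lim_{N}\frac1N\sum_{n=1}^N f(x_n)=\int_X f\,d\mu$ for all $f\in\mathcal{V}$. By linearity this limit relation holds for every $f\in\mbox{sp}(\mathcal{V})$.

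\emph{Step 1: an upper bound on closed sets.} Let $F\subseteq X$ be closed and $\epsilon>0$. By the (outer) regularity of $\mu$ recalled in Section 2, there is an open $U\supseteq F$ with $\mu(U)<\mu(F)+\epsilon$. The closed sets $X\setminus U$ and $F$ are disjoint, so by the hypothesis that $\mbox{sp}(\mathcal{V})$ separates closed sets there is $h\in\mbox{sp}(\mathcal{V})$ with $0\le h\le 1$, $h=0$ on $X\setminus U$ and $h=1$ on $F$. In the degenerate cases $U=X$ or $F=\emptyset$ one simply takes $h\equiv 1$ or $h\equiv 0$, which lie in $\mbox{sp}(\mathcal{V})$ because it contains the constants. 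Then $\chi_F\le h\le\chi_U$, hence
$$\limsup_{N\to\infty}\nu_N(F)\le\lim_{N\to\infty}\frac1N\sum_{n=1}^N h(x_n)=\int_X h\,d\mu\le\mu(U)<\mu(F)+\epsilon .$$
Letting $\epsilon\to0$ gives $\limsup_N\nu_N(F)\le\mu(F)$ for every closed $F$.

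\emph{Step 2: passage to bounded continuous functions.} Let $f\in\mathcal{C}_b(X)$. Since constants are handled trivially, after an affine change I may assume $0\le f\le 1$. The sets $\{f\ge t\}$ are closed. Using Lemma \ref{lem:layer_cake} (valid for $\nu_N$ and for $\mu$), we have
$$\int_X f\,d\nu_N=\int_0^1\nu_N(\{f> t\})\,dt\le\int_0^1\nu_N(\{f\ge t\})\,dt .$$
The integrands are bounded by $1$, so the reverse Fatou lemma together with Step 1 yields
$$\limsup_N\int_X f\,d\nu_N\le\int_0^1\mu(\{f\ge t\})\,dt=\int_X f\,d\mu .$$
The last equality holds because $\mu(\{f\ge t\})=\mu(\{f>t\})$ for all but countably many $t$. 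Applying the same inequality to $1-f$ gives $\liminf_N\int_X f\,d\nu_N\ge\int_X f\,d\mu$. Hence $\frac1N\sum_{n=1}^N f(x_n)\to\int_X f\,d\mu$, so the sequence is $\mu$-u.d.\ and $\mathcal{V}$ is convergence-determining.

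The main obstacle is conceptual: in the non-compact setting one cannot hope that $\mathcal{C}_b(X)\subseteq\overline{\mbox{sp}(\mathcal{V})}$, so the criterion \cite[Theorem 1.1, Ch.3]{kuipersNiederreiter1974} cannot be applied directly. The replacement is Step 1, where the separation of closed sets combined with regularity of $\mu$ produces a one-sided sandwich $\chi_F\le h\le\chi_U$. Once that bound is established, Step 2 is the standard portmanteau argument, and normality of $X$ is not needed beyond what the separation hypothesis already supplies.
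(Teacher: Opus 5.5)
Your proof is correct, but your framing is not: the paper does exactly what you say is unavailable. It proves that $\mbox{sp}(\mathcal{V})$ is dense in $\mathcal{C}_b(X)$ in the supremum norm, and then applies the Kuipers--Niederreiter criterion. Compactness is only needed for the Stone--Weierstrass route, which starts from separation of \emph{points}. Separation of \emph{closed sets} by $[0,1]$-valued functions is precisely the input for the iteration in the proof of Tietze's theorem, and that iteration works on any space. Concretely, for $f\in\mathcal{C}_b(X)$ with $c=\|f\|_\infty>0$:
\begin{enumerate}[(a)]
\item separate $\{f\ge c/3\}$ from $\{f\le -c/3\}$ by some $h\in\mbox{sp}(\mathcal{V})$;
\item set $g=\frac{2c}{3}h-\frac{c}{3}$ (this is where the constants enter);
\item check that $\|f-g\|_\infty\le\frac{2c}{3}$;
\item repeat with $f-g$ in place of $f$.
\end{enumerate}
The partial sums then converge to $f$ uniformly, geometrically fast.

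Your route is genuinely different. One separating function for the pair $(F,X\setminus U)$, together with outer regularity of $\mu$, gives the sandwich $\chi_F\le h\le\chi_U$. That is exactly condition (iii) of Theorem \ref{thm:uniform_distribution_equivalence}. Your Step 2 is then a valid reverse-Fatou proof of (iii)$\Rightarrow$(i). You could simply cite that implication instead; there is no circularity, since the theorem's proof does not use this lemma.

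What each approach buys:
\begin{itemize}
\item The paper's argument gives a measure-free statement, $\mathcal{C}_b(X)\subseteq\overline{\mbox{sp}(\mathcal{V})}$. So $\mathcal{V}$ is convergence-determining for every Borel probability measure at once, regular or not, and no portmanteau machinery is needed.
\item Your argument avoids the iterative construction and ties the lemma directly to the portmanteau characterization. The price is that it genuinely uses the standing regularity assumption on $\mu$.
\item As a minor bonus, your argument does not need the constants in $\mbox{sp}(\mathcal{V})$ at all. The case $U=X$ is trivial, since $\limsup_N\nu_N(F)\le 1=\mu(U)<\mu(F)+\epsilon$.
\end{itemize}
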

\begin{proof}
By
\cite[Theorem 1.1, Ch.3]{kuipersNiederreiter1974}, we only need to prove that $\mbox{sp}(\mathcal{V})$ is dense in $\mathcal{C}_b(X)$.

For any $f \in \mathcal{C}_b(X)$, since $ f $ is bounded, $ c_0 = \|f\|_\infty = \sup_{x \in X} |f(x)| < \infty $. Our goal is to construct a sequence $ \{g_n\} \subseteq  \mbox{sp}(\mathcal{V})$ such that the partial sums $ G_n = \sum_{k=0}^n g_k $ converge uniformly to $ f $. Clearly, if $c_0=0$, then $f$ is a constant function and the conclusion holds trivially since the constant function $ 0 \in \mbox{sp}(\mathcal{V})$. In what follows we assume $c_0>0$.

Set
$$E_0 = \left\{ x \in X \,\Big|\, f(x) \geq \frac{c_0}{3} \right\}, \quad F_0 = \left\{ x \in X\,\Big|\,f(x) \leq - \frac{c_0}{3} \right\}.$$
Clearly, $ E_0 $ and $ F_0 $ are disjoint.
Since $\mbox{sp}(\mathcal{V})$ separates closed sets, there exists a function $ h_0 \in \mbox{sp}(\mathcal{V})$ such that $ h_0|_{E_0} = 1 $, $ h_0|_{F_0} = 0 $, and $ 0 \leq h_0 \leq 1 $. Define
$$g_0 = \frac{2 c_0}{3} h_0 - \frac{c_0}{3}.$$
Since $\mbox{sp}(\mathcal{V})$ contains the constant functions and is closed under scalar multiplication and addition, $ g_0 \in \mbox{sp}(\mathcal{V})$.
By $ 0 \leq h_0(x) \leq 1 $, we have $ -\dfrac{c_0}{3} \leq g_0(x) \leq \dfrac{c_0}{3} $  for any $x\in X$ and,
$$g_0(x) = \frac{2 c_0}{3} \cdot 1 - \frac{c_0}{3} = \frac{c_0}{3}, \ \forall\ x\in E_0 \text{ and }  g_0(x) = \frac{2 c_0}{3} \cdot 0 - \frac{c_0}{3} = -\frac{c_0}{3},\ \forall\ x\in F_0.$$

Next, let us consider the function $ f - g_0 $. If $ x \in E_0 $, $g_0(x)=\dfrac{c_0}{3}\le f(x) $ and $ f(x) \leq c_0 $. Then we have
\begin{equation}\label{lemma3.1 eq1}
0 \leq f(x) - g_0(x) \leq c_0 - \frac{c_0}{3} = \frac{2 c_0}{3}, \ \forall \ x\in E_0.
\end{equation}
Similarly,
\begin{equation}\label{lemma3.1 eq2}
-\frac{2 c_0}{3} \leq f(x) - g_0(x) \leq 0,\ \forall \ x\in  F_0.
\end{equation}
If $ x \notin E_0 \cup F_0 $, then $ -\dfrac{c_0}{3}< f(x) < \dfrac{c_0}{3} $ and $ -\dfrac{c_0}{3} \leq g_0(x) \leq \dfrac{c_0}{3} $. Therefore,
\begin{equation}\label{lemma3.1 eq3}
|f(x) - g_0(x)| \leq |f(x)| + |g_0(x)| < \frac{c_0}{3} + \frac{c_0}{3} = \frac{2 c_0}{3},\ \forall\ x \notin E_0 \cup F_0.
\end{equation}
		
By \eqref{lemma3.1 eq1}-\eqref{lemma3.1 eq3}, we have
$$\|f - g_0\|_\infty \leq \frac{2 c_0}{3}.$$
Define $ \varphi_0 = f - g_0 $ with $ \|\varphi_0\|_\infty \leq \dfrac{2 c_0}{3} $. For $ n \geq 1 $, given $ \varphi_{n-1} $ with $ \|\varphi_{n-1}\|_\infty \leq \dfrac{2^n c_0}{3^n} $, set $ c_n = \|\varphi_{n-1}\|_\infty $ and define
$$E_n = \left\{ x\in X \,\Big|\, \varphi_{n-1}(x) \geq \dfrac{c_n}{3}  \right\}, \quad F_n = \left\{ x\in X \,\Big|\, \varphi_{n-1}(x) \leq -\dfrac{c_n}{3}  \right\}.$$
Since $ \varphi_{n-1} $ is continuous, $ E_n $ and $ F_n $ are closed and disjoint. There exists $ h_n \in \mbox{sp}(\mathcal{V})$ such that $ h_n|_{E_n} = 1 $, $ h_n|_{F_n} = 0 $, and $ 0 \leq h_n \leq 1 $. Set
$$g_n = \frac{2 c_n}{3} h_n - \frac{c_n}{3}.$$
Clearly, $g_n \in \mbox{sp}(\mathcal{V})$, $ g_n|_{E_n} = \dfrac{c_n}{3} $, $ g_n|_{F_n} = -\dfrac{c_n}{3} $, and $ -\dfrac{c_n}{3} \leq g_n \leq \dfrac{c_n}{3} $. Define $ \varphi_n = \varphi_{n-1} - g_n $. By the same arguments in the above, we obtain that
$$\|\varphi_n\|_\infty \leq \frac{2 c_n}{3} \leq  \frac{2^{n+1} c_0}{3^{n+1}}.$$
		
Define the partial sums $G_n = \sum_{k=0}^n g_k $. Since $ \mbox{sp}(\mathcal{V})$ is a subalgebra, $G_n \in \mbox{sp}(\mathcal{V})$. We have
$$
f - G_n  =  f - \sum_{k=0}^n g_k \notag  =  \varphi_n,
$$
which implies that
$$\|f - G_n\|_\infty = \|\varphi_n\|_\infty \leq \frac{2^{n+1} c_0}{3^{n+1}}\to 0\ \text{as } n\to \infty.
$$
Therefore, $\mbox{sp}(\mathcal{V})$ is dense in  $\mathcal{C}_b(X)$ with respect to the supremum norm. This completes the proof.
\end{proof}

The following characterizations of u.d.\ sequences on a normal space $(X,\tau)$ (which need not be compact) extend the corresponding results for compact Hausdorff spaces (see, Theorem 1.2 and Exercise 1.6 in \cite{kuipersNiederreiter1974}).

\begin{theorem}\label{thm:uniform_distribution_equivalence}
The following conditions are equivalent for a sequence $\{x_n\}_{n=1}^\infty$ in $ X $.
\begin{enumerate}[(i)]
\item the sequence $\{x_n\}_{n=1}^\infty$ is $ \mu $-u.d.\;
\item for all open sets $ D \subseteq X $,
$$
\varliminf_{N \to \infty} \frac{A(D; N)}{N} \geq \mu(D);
$$
\item for all closed sets $ C \subseteq X $,
$$
\varlimsup_{N \to \infty} \frac{A(C; N)}{N} \leq \mu(C);
$$
\item for all $ \mu $-continuity sets $ M$ of $X $,
$$\lim_{N \to \infty} \frac{A(M; N)}{N} = \mu(M).$$
\end{enumerate}
\end{theorem}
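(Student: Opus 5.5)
This is a Portmanteau-type statement, so I would prove the cycle (i)$\Rightarrow$(iii)$\Leftrightarrow$(ii), then (ii)+(iii)$\Rightarrow$(iv), and finally (iv)$\Rightarrow$(i). Throughout, $A(E;N)=\sum_{n=1}^N\chi_E(x_n)$, and normality of $X$ together with regularity of $\mu$ replace the metric arguments used in the classical proof.

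\emph{(i)$\Rightarrow$(iii).} Fix a closed set $C$ and $\varepsilon>0$. By regularity of $\mu$ there is an open $D\supseteq C$ with $\mu(D)<\mu(C)+\varepsilon$. The sets $C$ and $X\setminus D$ are disjoint and closed, so Lemma \ref{lem:urysohn} gives a continuous $h:X\to[0,1]$ with $h=1$ on $C$ and $h=0$ on $X\setminus D$. Then $\chi_C\le h\le\chi_D$, hence
\[
\varlimsup_{N\to\infty}\frac{A(C;N)}{N}\le\lim_{N\to\infty}\frac1N\sum_{n=1}^N h(x_n)=\int_X h\,d\mu\le\mu(D)<\mu(C)+\varepsilon .
\]
\emph{(ii)$\Leftrightarrow$(iii).} This follows by taking complements, since $A(X\setminus E;N)=N-A(E;N)$ and $\mu(X\setminus E)=1-\mu(E)$.

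\emph{(ii),(iii)$\Rightarrow$(iv).} For a $\mu$-continuity set $M$ we have $\mathrm{int}M\subseteq M\subseteq\overline M$ and $\mu(\mathrm{int}M)=\mu(M)=\mu(\overline M)$. Applying (ii) to $\mathrm{int}M$ and (iii) to $\overline M$ squeezes $A(M;N)/N$ to $\mu(M)$.

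\emph{(iv)$\Rightarrow$(i).} This is the step I expect to be the main obstacle, because we cannot use balls or metric neighbourhoods. Fix $f\in\mathcal C_b(X)$; after an affine change we may assume $0<f<1$. The layer sets $\{f=\alpha\}$ are disjoint, so, as in Example \ref{ex:mu-continuous set}, at most countably many $\alpha$ have $\mu(\{f=\alpha\})>0$. Given $m$, choose levels $0=t_0<t_1<\dots<t_k=1$ with mesh $<1/m$ such that $\mu(\{f=t_j\})=0$ for all $j$. Each set $M_j=\{f\ge t_j\}$ is closed and satisfies $\partial M_j\subseteq\{f=t_j\}$, so it is a $\mu$-continuity set. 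Define the step function $s=\sum_{j=1}^{k}(t_j-t_{j-1})\chi_{M_j}$. It satisfies $|f-s|\le 1/m$, and by (iv),
\[
\frac1N\sum_{n=1}^N s(x_n)\to\int_X s\,d\mu .
\]
A $2/m$ argument, letting $m\to\infty$, then gives (i). This is essentially a discrete form of Cavalieri's principle (Lemma \ref{lem:layer_cake}).

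The only points needing care are these. First, the implication (i)$\Rightarrow$(iii) genuinely needs outer regularity and Urysohn's lemma in the $T_4$ setting, rather than distance functions. Second, the choice of levels avoiding atoms of the pushforward measure $\mu\circ f^{-1}$.
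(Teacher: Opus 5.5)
Your proof is correct. Its first three steps match the paper's up to complementation.

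\begin{itemize}
\item The paper proves (i)$\Rightarrow$(ii) by contradiction. It uses inner regularity (a closed $F\subseteq D$ with $\mu(D\setminus F)$ small) and Urysohn for $F$ and $X\setminus D$.
\item You prove (i)$\Rightarrow$(iii) directly, from outer regularity and Urysohn for $C$ and $X\setminus D$.
\item The complement step and the squeeze for $\mu$-continuity sets are identical.
\end{itemize}

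The real difference is in (iv)$\Rightarrow$(i).

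\emph{The paper's route.} It applies Cavalieri's principle (Lemma \ref{lem:layer_cake}) to write both $\frac1N\sum_{n=1}^N f(x_n)$ and $\int_X f\,d\mu$ exactly as integrals over all levels $t\in[0,K]$ of the measures of $\{f>t\}$ and $\{f<-t\}$. It then notes that these sets are $\mu$-continuity sets for all but countably many $t$, and passes to the limit by dominated convergence in $t$.

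\emph{Your route.} You discretize instead. Finitely many good levels give a step function $s=\sum_j (t_j-t_{j-1})\chi_{M_j}$, built from closed $\mu$-continuity sets, with $\|f-s\|_\infty\le 1/m$. A three-term estimate then finishes the argument.

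\emph{What each buys.}
\begin{itemize}
\item Your version needs no limit theorem in the auxiliary variable $t$.
\item It gives the explicit bound $\varlimsup_{N}\bigl|\frac1N\sum_{n=1}^N f(x_n)-\int_X f\,d\mu\bigr|\le 2/m$.
\item It shows directly that $\mathcal{C}_b(X)$ lies in the sup-norm closure of the span of indicators of $\mu$-continuity sets. The corollary following the theorem then becomes an instance of the density criterion the paper quotes before Lemma \ref{normal_stone_weierstrass}.
\item The paper's version is shorter: it is an exact identity with no approximation parameter, at the cost of the dominated convergence step.
\end{itemize}
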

	
\begin{proof}		
(i)$\Rightarrow$ (ii): 	Assume, to the contrary, that there exists an open set $D$ such that
$$\varliminf_{N \to \infty} \frac{A(D; N)}{N} < \mu(D).$$
Then, there exists $\delta > 0$ and a subsequence $\{N_k\}_{k=1}^{\infty}$ with $N_k \to \infty$ such that
$$\frac{A(D; N_k)}{N_k} < \mu(D) - \delta$$
for all $k$. Since $\mu$ is regular, there exists a closed set $F \subseteq D$ with $\mu(D \setminus F) < \dfrac{\delta}{2}$. By the regularity of $T_4$ space, there exists a continuous function $f: X \to [0, 1]$ such that $f(x) = 1$ for $x \in F$ and $f(x) = 0$ for $x \notin D$. Then,
$$\int_X f \, d\mu \geq \mu(F) > \mu(D) - \frac{\delta}{2}.$$
Since $\{x_n\}_{n=1}^\infty$ is $\mu$-u.d.\, we have
\begin{equation}\label{eq 3.1}
\lim_{N \to \infty} \frac{1}{N} \sum_{n=1}^N f(x_n) = \int_X f \, d\mu > \mu(D) - \frac{\delta}{2}.
\end{equation}
By the definition of $ f $, $f(x_n) \leq \chi_D(x_n)$ and
$$\frac{1}{N_k} \sum_{n=1}^{N_k} f(x_n) \leq \frac{A(D; N_k)}{N_k} < \mu(D) - \delta,$$
which contradicts to \eqref{eq 3.1} when $k$ is large enough. This proves (ii).
		
(ii)$\Rightarrow$(iii):	For a closed set $C \subseteq X$, since $X \setminus C$ is open, we have by (ii),
$$
\varliminf_{N \to \infty} \frac{A(X \setminus C; N)}{N} \geq \mu(X \setminus C) = 1 - \mu(C).
$$
Noting that $A(X \setminus C; N) = N - A(C; N)$, we obtain
$$
\varliminf_{N \to \infty} \left(1 - \frac{A(C; N)}{N}\right) \geq 1 - \mu(C),
$$
which implies
$$
\varlimsup_{N \to \infty} \frac{A(C; N)}{N} \leq \mu(C).
$$
Thus, condition (iii) holds.
		
(iii)$\Rightarrow$ (iv): Let $M \subseteq X$ be a $\mu$-continuity set, then $\mu(\overline{M}) = \mu(M) = \mu(\mbox{int}M)$. By $\mbox{int}M \subseteq M \subseteq \overline{M}$, we have $A(\mbox{int}M; N) \leq A(M; N) \leq A(\overline{M}; N)$. Applying the assumption to the closed set $\overline{M}$, we have
\begin{equation}\label{th3.1 eq1}
\varlimsup_{N \to \infty} \frac{A(\overline{M}; N)}{N} \leq \mu(\overline{M}),
\end{equation}
and since $X \setminus \mbox{int}M$ is closed, we have
$$
\varlimsup_{N \to \infty} \frac{A(X \setminus \mbox{int}M; N)}{N} \leq \mu(X \setminus \mbox{int}M).
$$
It follows that,
\begin{equation}\label{th3.1 eq2}
\varliminf_{N \to \infty} \frac{A(\mbox{int}M; N)}{N} = 1 - \varlimsup_{N \to \infty} \frac{A(X \setminus \mbox{int}M; N)}{N} \geq 1 - \mu(X \setminus \mbox{int}M) = \mu(\mbox{int}M).
\end{equation}
Combining \eqref{th3.1 eq1} with \eqref{th3.1 eq2}  we obtain that,
\begin{eqnarray*}
\mu(\overline{M}) &\geq& \varlimsup_{N \to \infty} \frac{A(\overline{M}; N)}{N} \geq \varlimsup_{N \to \infty} \frac{A(M; N)}{N} \notag \nonumber\\
			&\geq& \varliminf_{N \to \infty} \frac{A(M; N)}{N} \geq \varliminf_{N \to \infty} \frac{A(\mbox{int}M; N)}{N} \geq \mu(\mbox{int}M).
\end{eqnarray*}
Consequently,
$$
\lim_{N \to \infty} \frac{A(M; N)}{N} = \mu(M).
$$

(iv)$\Rightarrow$(i): It is equivalent to prove that
\begin{equation}\label{eq:weak_conv_cond}
\lim_{N \to \infty}\int_{X}fd\nu_N=\int_{X}fd\mu, \quad \forall \ f\in \mathcal{C}_{b}(X),
\end{equation}
where $\nu_N=\sum_{k=1}^N \delta_{x_k}$ and $\delta_{x_k}$ is the Dirac measure at $x_k$ ($k\in \mathbb{N}$).
Specifically, for any Borel set $M$,
$$\nu_N(M) = \frac{1}{N} \sum_{n=1}^N \delta_{x_n}(M) =\frac{1}{N} \sum_{n=1}^N \chi_M(x_n) = \frac{A(M;N)}{N}.$$
Then, by condition (iv), for any $\mu$-continuity set $M$ we have
$$\nu_N(M) \to \mu(M)\ \text{as}\ N\to \infty.$$
		
Let $ f: X \to \mathbb{R} $ be a bounded continuous function with $ |f(x)| \leq K $ for some $ K > 0 $ and all $ x \in X $. Clearly,
$$\int_X f \, d\nu_N = \frac{1}{N} \sum_{n=1}^N f(x_n).$$

By Lemma \ref{lem:layer_cake},
\begin{equation}\label{th3.1 eq3}
\int_X f \, d\nu_N  =  \int_0^K \nu_N(\{ x \,|\, f(x) > t \}) \, dt - \int_0^K \nu_N(\{ x \,|\, f(x) < -t \}) \, dt,
\end{equation}
and
\begin{equation}\label{th3.1 eq4}
\int_X f \, d\mu  =   \int_0^K \mu(\{ x\,|\, f(x) > t \}) \, dt  - \int_0^K \mu(\{ x\,|\, f(x) < -t \}) \, dt.
\end{equation}
		
Define
$$
M_t^+ = \{ x \,|\, f(x) > t \}, \quad M_t^- = \{ x\,|\,f(x) < -t \},\ t \in [0, K].
$$
Since $ f $ is continuous and $ X $ is a $T_4$ space, the sets $ M_t^+ $ and $ M_t^- $ are open and
$$\partial M_t^+ \subseteq \{ x \,|\, f(x) = t \}, \quad \partial M_t^- \subseteq \{ x  \,|\, f(x) = -t \}.
$$
Since $ \mu $ is a finite measure, the set of $ t $ for which $ \mu(\partial M_t^+) > 0 $ or $ \mu(\partial M_t^-) > 0 $ is at most countable (see Example \ref{ex:mu-continuous set}) and  $ M_t^+ $ and $ M_t^- $ are $ \mu $-continuity sets for almost all $ t\in [0,K]$.
Then, by (iv), for all such $ t $,
$$\nu_N(M_t^+) \to \mu(M_t^+), \quad \nu_N(M_t^-) \to \mu(M_t^-),\ \text{as } N \to \infty.
$$

For each $ t $, the functions $ t \mapsto \nu_N(M_t^+) $ and $ t \mapsto \nu_N(M_t^-) $ are bounded by 1. By Lebesgue's dominated convergence theorem,  %
\begin{equation}\label{th3.1 eq5}
\lim_{N \to \infty} \int_0^K \nu_N(M_t^+) \, dt = \int_0^K \lim_{N \to \infty} \nu_N(M_t^+) \, dt = \int_0^K \mu(M_t^+) \, dt,
\end{equation}
and
\begin{equation}\label{th3.1 eq6}
\lim_{N \to \infty} \int_0^K \nu_N(M_t^-) \, dt = \int_0^K \lim_{N \to \infty} \nu_N(M_t^-) \, dt = \int_0^K \mu(M_t^-) \, dt.
\end{equation}
		
Combining \eqref{th3.1 eq3}-\eqref{th3.1 eq4} with \eqref{th3.1 eq5}-\eqref{th3.1 eq6}, we obtain
\begin{eqnarray*}
\lim_{N \to \infty} \int_X f \, d\nu_N & =& \lim_{N \to \infty} \left( \int_0^K \nu_N(M_t^+) \, dt - \int_0^K \nu_N(M_t^-) \, dt \right)  \nonumber\\
&=& \int_0^K \mu(M_t^+) \, dt - \int_0^K \mu(M_t^-) \, dt   \nonumber\\
&=& \int_X f \, d\mu.
\end{eqnarray*}
This proves \eqref{eq:weak_conv_cond}.
\end{proof}

The following result is a direct corollary of Theorem \ref{thm:uniform_distribution_equivalence}.
	
\begin{corollary}\label{Weyl's Theorem for Normal Spaces}
The class
$$\left\{ \chi_M \,|\, M \text{ is a } \mu\text{-continuity set} \right\} $$ is a convergence-determining class with respect to $\mu$.
\end{corollary}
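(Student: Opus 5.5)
The plan is to read the corollary off the implication (iv)$\Rightarrow$(i) of Theorem \ref{thm:uniform_distribution_equivalence}. By the definition of a convergence-determining class, we must show the following. Suppose a sequence $\{x_n\}_{n=1}^\infty$ satisfies
$$\lim_{N\to\infty}\frac1N\sum_{n=1}^N \chi_M(x_n)=\int_X\chi_M\,d\mu$$
for every $\mu$-continuity set $M$. Then $\{x_n\}_{n=1}^\infty$ must be $\mu$-u.d.

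First, note that each $\chi_M$ is a bounded Borel measurable function. A $\mu$-continuity set is by definition a Borel set, so the proposed class is indeed a subset of $b(X)$ and the notion makes sense.

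Next, rewrite the hypothesis in the counting notation of the theorem. We have $\frac1N\sum_{n=1}^N\chi_M(x_n)=\frac{A(M;N)}{N}$ and $\int_X\chi_M\,d\mu=\mu(M)$. So the hypothesis says exactly that $\lim_{N\to\infty}A(M;N)/N=\mu(M)$ for all $\mu$-continuity sets $M$, which is condition (iv).

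Since the standing assumptions of this section ($X$ a $T_4$ space, $\mu$ a regular non-atomic probability measure) are those of Theorem \ref{thm:uniform_distribution_equivalence}, the implication (iv)$\Rightarrow$(i) yields that $\{x_n\}_{n=1}^\infty$ is $\mu$-u.d. This completes the argument.

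There is no real obstacle here; the only point to check is the identification of the averages of indicator functions with $A(M;N)/N$. All the analytic work (the layer-cake representation and the fact that level sets $\{f>t\}$ are $\mu$-continuity sets for all but countably many $t$) is already contained in the proof of (iv)$\Rightarrow$(i).
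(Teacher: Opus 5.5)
Your proposal is correct and follows the same route as the paper, which states this corollary as a direct consequence of Theorem~\ref{thm:uniform_distribution_equivalence}. Identifying $\frac1N\sum_{n=1}^N\chi_M(x_n)$ with $A(M;N)/N$ and $\int_X\chi_M\,d\mu$ with $\mu(M)$ turns the hypothesis into condition (iv), and the implication (iv)$\Rightarrow$(i) then gives that the sequence is $\mu$-u.d.
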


\section{Countable convergence-determining class and construction
of u.d.\ sequences on Polish spaces}
	
In this section, we assume that $(X,\tau)$ is a Polish space  and $\mu$ is a regular, non-atomic probability measure on $(X,\mathcal{B}(X))$.
Note that for a general non-compact topological space, $\mathcal{C}_{b}(X)$ need not be separable and the classical Stone-Weierstrass theorem  is inapplicable for constructing a countable convergence-determining class within $\mathcal{C}_{b}(X)$. In what follows, the tightness of the Borel probability measure on Polish space (see Lemma \ref{lem:tight-prob}) is employed to prove the existence of a countable convergence-determining class contained in $b(X)$.

\begin{theorem} \label{thm:countable_continuous_conv_det}
If $(X,\tau)$ is a Polish space and $ \mu $ is a regular probability measure on $(X,\mathcal{B}(X)) $, then, there exists a countable convergence-determining class $ \mathcal{V} \subset b(X) $ with respect to $ \mu $.
\end{theorem}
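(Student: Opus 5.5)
Tightness (Lemma \ref{lem:tight-prob}) lets us work on a compact set $K$ of nearly full measure. There Lemma \ref{lem:separable_Cb} supplies a countable dense family. We glue these families together over an increasing sequence of compacts, taking the countable class to consist of the resulting functions multiplied by $\chi_{K_m}$.

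\emph{Compacts.} By Lemma \ref{lem:tight-prob}, choose compact sets $K_1\subseteq K_2\subseteq\cdots$ with $\mu(K_m)>1-1/m$; replacing $K_m$ by $K_1\cup\cdots\cup K_m$ gives monotonicity. Each $K_m$, as a compact subset of a Polish space, is a compact separable metric space.

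\emph{Countable class.} By Lemma \ref{lem:separable_Cb}, $\mathcal{C}_b(K_m)$ has a countable dense subset $\{\phi_{m,j}\}_{j\ge1}$. Since a metric space is normal, Lemma \ref{lem:tietze} extends each $\phi_{m,j}$ to some $\tilde\phi_{m,j}\in\mathcal{C}_b(X)$ with $\|\tilde\phi_{m,j}\|_\infty=\|\phi_{m,j}\|_\infty$. Set
$$\mathcal{V}=\{1\}\cup\{\chi_{K_m}\}_{m\ge1}\cup\{\tilde\phi_{m,j}\chi_{K_m}\}_{m,j\ge1}\subset b(X),$$
which is countable.

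\emph{Verification.} Suppose the averages converge to the integrals for every element of $\mathcal{V}$. Let $f\in\mathcal{C}_b(X)$ with $\|f\|_\infty\le c$, and fix $\epsilon>0$.
\begin{enumerate}[(a)]
\item Pick $m$ with $1/m<\epsilon$, and pick $j$ with $\sup_{K_m}|f-\phi_{m,j}|<\epsilon$. Write $g=\tilde\phi_{m,j}$; then $\|g\|_\infty\le c+\epsilon$.
\item Split
$$f=g\chi_{K_m}+(f-g)\chi_{K_m}+f\chi_{X\setminus K_m}.$$
\item The middle term is bounded by $\epsilon$ pointwise, so its averages and its integral are both within $\epsilon$ of zero.
\item The last term is bounded by $c\,\chi_{X\setminus K_m}=c(1-\chi_{K_m})$. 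Since $1$ and $\chi_{K_m}$ lie in $\mathcal{V}$, its averages satisfy
$$\varlimsup_{N\to\infty}\frac1N\sum_{n=1}^N c\bigl(1-\chi_{K_m}(x_n)\bigr)=c\bigl(1-\mu(K_m)\bigr)<c\epsilon,$$
and its integral is also at most $c\epsilon$.
\item The first term lies in $\mathcal{V}$, so its averages converge to $\int g\chi_{K_m}\,d\mu$.
\end{enumerate}
Combining these gives
$$\varlimsup_{N\to\infty}\Bigl|\frac1N\sum_{n=1}^N f(x_n)-\int_X f\,d\mu\Bigr|\le 2\epsilon+2c\epsilon.$$
Letting $\epsilon\to0$ shows that $\{x_n\}_{n=1}^\infty$ is $\mu$-u.d.

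\emph{Main obstacle.} The essential difficulty is that the approximation must be uniform on the sequence points $x_n$, not merely $\mu$-a.e. Points falling outside $K_m$ are not controlled by the approximation on $K_m$. This is exactly why $\chi_{K_m}$ itself must be included in $\mathcal{V}$: it pins down the asymptotic frequency of visits to $X\setminus K_m$. It is also why the class lives in $b(X)$ rather than $\mathcal{C}_b(X)$. With these indicators available, the rest is the routine $\epsilon$-bookkeeping above.
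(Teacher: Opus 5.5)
Your proof is correct and follows essentially the paper's route. Tightness gives compacts of nearly full measure, a countable dense subset of $\mathcal{C}_b(K_m)$ handles $f$ on $K_m$, and indicators of the compacts (the paper uses $\chi_{K_i^c}$, you use $1$ and $\chi_{K_m}$) control the asymptotic frequency of points outside $K_m$. The one difference is that you put the truncations $\tilde\phi_{m,j}\chi_{K_m}$ into $\mathcal{V}$ rather than the extensions themselves. This makes the Tietze step unnecessary, along with the bound $\|f_{k_0}^{i}\|_\infty < M+\epsilon/4$ that the paper uses off $K_i$; in exchange, the paper's approximants remain continuous on all of $X$.
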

	
\begin{proof}
By Lemma \ref{lem:tight-prob}, the probability measure $ \mu $ on $ X $ is tight. For each $i\in \mathbb{N}$, let $ K_i \subset X $ be a compact subset of $X$ such that $\mu(K_i) > 1-\frac{1}{i}$.

For each fixed $ i\in \mathbb{N} $, since $ K_i $ is compact, Lemma \ref{lem:separable_Cb} ensures the existence of a countable set $ \left\{g_k^{i}\right\}_{k=1}^\infty \subset \mathcal{C}_{b}(K_i) $ that is dense in $\mathcal{C}_{b}(K_i)$ under the supremum norm. By Lemma \ref{lem:tietze}, each $ g_k^{i} $ can be extended to a function $ f_k^{i} \in \mathcal{C}_{b}(X)$ such that $ f_k^{i}|_{K_i} = g_k^{i} $ and $ \|f_k^{i}\|_\infty = \|g_k^{i}\|_\infty $. Define
$$ \mathcal{U} = \left\{ f_k^{i} \,\Big|\, i, k \in \mathbb{N} \right\} ,$$
 which is a countable set.  For each $i\in \mathbb{N}$, denoted by $K_i^c$  the complement of set $K_i$. Let $ \mathcal{W} = \left\{ \chi_{K_{i}^{c}} \,\Big|\, i  \in \mathbb{N} \right\}$ and define $ \mathcal{V} = \mathcal{U} \cup \mathcal{W} $, which is a countable subset of $b(X)$. We show that $ \mathcal{V} $ is a countable convergence-determining class.

Consider a sequence $ \{x_n\}_{n=1}^\infty \subset X $ such that
\begin{equation}\label{eq:assumption_conv_v}
\lim_{N \to \infty} \frac{1}{N} \sum_{n=1}^{N} f(x_n) = \int_X f \, d\mu,\  \forall f\ \in \mathcal{V}.
\end{equation}
Let $ f \in C_b(X) $ and fix $ \epsilon > 0 $. Set $ M = \|f\|_\infty $. Choose $i$ such that
\begin{equation}\label{th4.1 eq2}
\dfrac{1}{i} < \dfrac{\epsilon}{8M + \epsilon } .
\end{equation}
Since $ \{g_k^{i}\}_{k=1}^\infty $ is dense in $ \mathcal{C}_{b}(K_i) $, there exists $ k_0 $ such that $ \sup\limits_{x\in K_i} |f(x) - g_{k_0}^{i}(x)| < \dfrac{\epsilon} {4} $. Let  $f_{k_0}^{i}$ be the extension of $g_{k_0}^{i}$ on $X$, then,  $ f_{k_0}^{i}|_K = g_{k_0}^{i} $,
\begin{equation}\label{th4.1 eq3}
\sup\limits_{x\in K_i} |f(x) - f_{k_0}^{i}(x)| < \dfrac{\epsilon} {4}, \text{ and }  \|f_{k_0}^{i}\|_\infty < M + \dfrac{\epsilon} {4}.
\end{equation}
		
Clearly,
\begin{equation}\label{eq:triangle_inequality}
\left| \frac{1}{N} \sum_{n=1}^{N} f(x_n) - \int_X f \, d\mu \right| \leq \left| \frac{1}{N} \sum_{n=1}^{N} (f - f_{k_0}^{i})(x_n) \right|
+ \left| \frac{1}{N} \sum_{n=1}^{N} f_{k_0}^{i}(x_n) - \int_X f_{k_0}^{i} \, d\mu \right|
 + \left| \int_X (f_{k_0}^{i} - f) \, d\mu \right|.			
\end{equation}

For the first term of the right-hand side of equation \eqref{eq:triangle_inequality},
\begin{eqnarray}\label{eq:first_term_split}
\left| \frac{1}{N} \sum_{n=1}^{N} (f - f_{k_0}^{i})(x_n) \right|
&\le& \frac{1}{N} \sum_{n=1}^{N} |f(x_n) - f_{k_0}^{i}(x_n)| \chi_{K_i}(x_n) + \frac{1}{N} \sum_{n=1}^{N} |f (x_n)- f_{k_0}^{i}(x_n)| \chi_{K_i^c}(x_n) \notag \\
&\leq& \sup_{x\in K_i} |f(x) - f_{k_0}^{i}(x)| + ( \|f\|_\infty + \|f_{k_0}^{i}\|_\infty ) \frac{1}{N} \sum_{n=1}^{N} \chi_{K_i^c}(x_n).
\end{eqnarray}
Since $ \chi_{K_i^c} \in  \mathcal{V} $,
\begin{equation}\label{eq:indicator_limit_conv}
\lim_{N \to \infty} \frac{1}{N} \sum_{n=1}^{N}\chi_{K_i^c}(x_n) = \int_X \chi_{K_i^c} \, d\mu = \mu(K_i^c) < \frac{1}{i}.
\end{equation}
		
Combining \eqref{eq:first_term_split}-\eqref{eq:indicator_limit_conv} with \eqref{th4.1 eq2}-\eqref{th4.1 eq3}, we have
\begin{eqnarray}\label{eq:first_term_limsup}
\lim_{N \to \infty} \left| \frac{1}{N} \sum_{n=1}^{N} (f - f_{k_0}^{i})(x_n) \right| &\leq& \sup_{x\in K_i} |f(x) - f_{k_0}^{i}(x)| + ( \|f\|_\infty + \|f_{k_0}^{i}\|_\infty ) \mu(K_i^c) \notag \\
&<& \frac{\epsilon}{4} +  (2M  + \dfrac{\epsilon}{4})\dfrac{1}{i} \notag \\
&<& \frac{\epsilon}{2}.	
\end{eqnarray}

For the second term of the right-hand side of equation \eqref{eq:triangle_inequality}, since $ f_{k_0}^{i}\in \mathcal{V} $,
\begin{equation}\label{eq:second_term_limit}
\lim_{N \to \infty} \left| \frac{1}{N} \sum_{n=1}^{N} f_{k_0}^{i}(x_n) - \int_X f_{k_0}^{i} \, d\mu \right| = 0.			
\end{equation}
		
For the third term of the right-hand side of equation \eqref{eq:triangle_inequality},
\begin{eqnarray}
\left| \int_X (f_{k_0}^{i} - f) \, d\mu \right| &\le& \left| \int_{K_i} (f_{k_0}^{i} - f) \, d\mu\right| + \left|\int_{K_{i}^c} (f_{k_0}^{i} - f) \, d\mu \right| \notag \\
&\leq& \sup_{x\in K_i} |f_{k_0}^{i}(x)- f(x)| + ( \|f_{k_0}^{i}\|_\infty + \|f\|_\infty ) \mu(K_{i}^c) \notag \\
&<&  \frac{\epsilon}{4} +  (2M  + \dfrac{\epsilon}{4})\dfrac{1}{i} \notag \\
&<&  \frac{\epsilon}{2}.
\label{eq:third_term_bound}
\end{eqnarray}

Combining \eqref{eq:triangle_inequality}, \eqref{eq:first_term_limsup}, \eqref{eq:second_term_limit}, and \eqref{eq:third_term_bound}, we have
$$\lim_{N \to \infty} \left| \frac{1}{N} \sum_{n=1}^{N} f(x_n) - \int_X f \, d\mu \right| \leq \epsilon.			
$$
By the arbitrariness of $\epsilon$, we have
$$\lim_{N \to \infty} \left| \frac{1}{N} \sum_{n=1}^{N} f(x_n) - \int_X f \, d\mu \right|=0.			
$$
Finally, by the arbitrariness of $f\in \mathcal{C}_{b}(x)$, $\{x_n\}_{n=1}^{\infty}$ is an u.d.\ sequence. This proves that $\mathcal{V}$ is  a convergence-determining class.
\end{proof}

Notice that,  the number-theoretic approaches cannot be used to construct  (low-discrepancy) u.d.\ sequences in general non-compact topological space $X$. In what follows, we demonstrate that,  the Strong LLN can be employed to construct u.d.\ sequences on Polish space by virtue of the existence of a countable convergence-determining class.

\begin{corollary}\label{slln-uniform}
Let $(\Omega, \mathcal{F}, P)$ be a probability space, and $\xi_n: \Omega \to X$, $n=1,2,\ldots$,  be a sequence of i.i.d.\ random variables such that the induced probability distribution of each $\xi_n$ on $X$ coincides with $\mu$. Then there exists a set $ V' \subset \Omega $ with $ P(V') = 1 $ such that for each $ \omega \in V' $, the sample path $ \{\xi_n(\omega)\}_{n=1}^\infty $ is $\mu$-u.d.\ on $ X $.
\end{corollary}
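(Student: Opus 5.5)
The plan is to combine Theorem \ref{thm:countable_continuous_conv_det} with the Strong LLN (Lemma \ref{lem:strong_law}), using countability to intersect full-measure events. By Theorem \ref{thm:countable_continuous_conv_det}, there is a countable convergence-determining class $\mathcal{V}=\{v_j\}_{j=1}^\infty\subset b(X)$ with respect to $\mu$. Concretely, it consists of the Tietze extensions $f_k^i\in\mathcal{C}_b(X)$ together with the indicators $\chi_{K_i^c}$. It therefore suffices to produce a single full-measure set $V'\subset\Omega$ on which
$$\lim_{N\to\infty}\frac1N\sum_{n=1}^N v_j(\xi_n(\omega))=\int_X v_j\,d\mu\quad\text{for all } j\in\mathbb{N}$$
holds simultaneously. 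The convergence-determining property then yields that $\{\xi_n(\omega)\}$ is $\mu$-u.d.\ for every $\omega\in V'$.

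Fix $j$. First I will check that $\eta_n^j:=v_j\circ\xi_n$ is a real random variable. This holds because $v_j$ is Borel measurable (continuous, or the indicator of the open set $K_i^c$) and $\xi_n$ is $\mathcal{F}/\mathcal{B}(X)$-measurable. Next, the sequence $\{\eta_n^j\}_n$ is i.i.d., since images of independent random variables under a fixed measurable map remain independent and identically distributed. Moreover $|\eta_n^j|\le\|v_j\|_\infty$, so $\mathbb{E}|\eta_1^j|<\infty$. By the change-of-variables formula and the hypothesis that the law of $\xi_1$ is $\mu$, we get $\mathbb{E}\eta_1^j=\int_X v_j\,d\mu$. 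Lemma \ref{lem:strong_law} then gives a set $V_j\subset\Omega$ with $P(V_j)=1$ on which the averages converge to $\int_X v_j\,d\mu$.

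Finally, I set $V'=\bigcap_{j}V_j$. Since this is a countable intersection, $P(\Omega\setminus V')\le\sum_j P(\Omega\setminus V_j)=0$. Every $\omega\in V'$ satisfies the convergence condition for all members of $\mathcal{V}$, and hence its sample path is $\mu$-u.d.

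The only real obstacle is conceptual rather than technical. Applying the Strong LLN directly to every $f\in\mathcal{C}_b(X)$ gives an exceptional null set depending on $f$, and an uncountable union of null sets need not be null. This is exactly why the countable class from Theorem \ref{thm:countable_continuous_conv_det} is needed, and why it matters that this class may contain discontinuous elements such as $\chi_{K_i^c}$, which are still bounded and Borel. I would also note that the Polish and regularity hypotheses enter only through that theorem, and that the identification $\mathbb{E}\,v_j(\xi_1)=\int v_j\,d\mu$ is routine.
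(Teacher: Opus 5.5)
Your proposal is correct and follows essentially the same route as the paper: take the countable class $\mathcal{V}$ from Theorem~\ref{thm:countable_continuous_conv_det}, apply the Strong LLN to each $f\circ\xi_n$ with $f\in\mathcal{V}$ to get full-measure sets $V_f$, and intersect them. You also spell out the routine checks the paper leaves implicit, namely measurability, the i.i.d.\ property and integrability of $f\circ\xi_n$, and the identity $\mathbb{E}f(\xi_1)=\int_X f\,d\mu$.
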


\begin{proof}
Let $\mathcal{V} \subseteq b(X)$ be the countable convergence-determining class defined in Theorem \ref{thm:countable_continuous_conv_det}.
By the Strong LLN, for each $f \in \mathcal{V}$, the set $V_f \subset \Omega$ defined by
$$
V_f = \left\{ \omega \in \Omega \ \middle|\ \lim_{N \to \infty} \frac{1}{N} \sum_{n=1}^{N} f(\xi_n(\omega))=\mathbb{E}f(\xi_1)= \int_X f \, d\mu \right\}
$$
satisfies $P(V_f) = 1$.
Define
$$
V' = \bigcap_{f \in \mathcal{V}} V_f.\label{eq:define_V_prime_general}
$$
Since $\mathcal{V}$ is countable,  $P(V') = 1$. Then, for any $\omega \in V'$, the sequence $\{\xi_n(\omega)\}_{n=1}^\infty$ satisfies
$$\lim_{N \to \infty} \frac{1}{N} \sum_{n=1}^{N} f(\xi_n(\omega)) = \int_X f \, d\mu, \quad \forall\ f \in \mathcal{V}.$$
By the definition of  convergence-determining class, the sequence $\{\xi_n(\omega)\}_{n=1}^\infty$ is $\mu$-u.d.\ on $X$.
\end{proof}

\begin{remark}
The existence of the probability space $(\Omega, \mathcal{F}, P)$ and the sequence of independent random variables $\xi_1, \xi_2, \dots$ satisfying the hypotheses of Corollary \ref{slln-uniform} can be established via Kolmogorov  Extension Theorem (see \cite[Theorem 2.1.21]{Durrett2019Prob}).

Indeed, let $\Omega = X^{\infty}$, the space of infinite sequences $x = (x_1, x_2, \dots)$ where each $x_n \in X$, and let $\mathcal{F}$ be the Borel $\sigma$-algebra on $X^{\infty}$ endowed with the product topology induced by the topology on $X$. Define the probability measure $P$  on $(\Omega, \mathcal{F})$ to be infinite product measure $\mu^{\infty}$, i.e., for any finite-dimensional cylinder set $A_1 \times \cdots \times A_n \times X \times X \times \cdots \in \mathcal{F}$ with $A_i \in \mathcal{B}(X)$,
$$
P(A_1 \times \cdots \times A_n \times X \times X \times \cdots) = \mu(A_1) \cdot \mu(A_2) \cdots \mu(A_n).
$$
By Kolmogorov's Extension Theorem, the probability space $(\Omega,\mathcal{F}, P)$ is well-defined.
For each $n\in \mathbb{N}$, define the random variable  $\xi_n: \Omega \to X$ by
$$\xi_n(\omega) = x_n,\quad \forall\ \omega = (x_1, x_2, \dots) \in \Omega.
$$
It is straightforward to verify that  $(\Omega, \mathcal{F}, P)$ and  $\{\xi_n\}_{n=1}^{\infty}$ satisfy all the conditions stated   in Corollary \ref{slln-uniform}. For more details, we refer the readers to 
\cite[Section 2.1.4]{Durrett2019Prob}.
\end{remark}

Corollary \ref{slln-uniform} provides an approach to generating u.d.\ sequences on Polish space $(X, \mathcal{B}(X), \mu)$ by strong LLN. In what follows we demonstrate that, for any ergodic transformation $T$, the  orbits of $T$ are $u$-u.d.\ on a full-measure set. First, we establish the existence of an ergodic transformation on a Polish space equipped with a non-atomic probability measure in the lemma below.
	
\begin{lemma}\label{thm:ergodic-existence}
There exists an ergodic transformation $T$ on $(X, \mathcal{B}(X), \mu)$.
\end{lemma}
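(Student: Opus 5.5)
The plan is to transport a known ergodic map on the unit interval to $X$ through the Borel isomorphism of Lemma \ref{lem:atomless_isomorphism}.

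\textbf{Step 1: reduce to $[0,1]$ with Lebesgue measure.} Since $(X,\tau)$ is Polish, $(X,\mathcal{B}(X))$ is a standard measurable space, so $(X,\mathcal{B}(X),\mu)$ is a standard measure space. Because $\mu$ is non-atomic, $\mu(\{x\})=0$ for every $x\in X$. Lemma \ref{lem:atomless_isomorphism} then gives a Borel isomorphism $\varphi:X\to[0,1]$ with $\mu\circ\varphi^{-1}=\lambda|_{[0,1]}$.

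\textbf{Step 2: choose an ergodic map on $[0,1]$.} Take $S:[0,1]\to[0,1]$ to be the irrational rotation $S(t)=t+\alpha \pmod 1$ for some irrational $\alpha$, setting $S(1)=\alpha$; alternatively, the doubling map $t\mapsto 2t \pmod 1$ works equally well. Both are Borel measurable and preserve $\lambda$. Their ergodicity is classical: for the rotation, an invariant $L^2$ function has Fourier coefficients $c_k$ satisfying $c_k e^{2\pi i k\alpha}=c_k$, which forces $c_k=0$ for $k\neq 0$. If one prefers to be self-contained, this Fourier argument is the only genuinely analytic step, and I would cite it (e.g.\ \cite{walters1982}) rather than reprove it.

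\textbf{Step 3: conjugate back to $X$.} Define $T=\varphi^{-1}\circ S\circ\varphi:X\to X$.
\begin{itemize}
\item $T$ is Borel measurable, since it is a composition of Borel maps.
\item For $A\in\mathcal{B}(X)$ we have $T^{-1}(A)=\varphi^{-1}\big(S^{-1}(\varphi(A))\big)$. Here $\varphi(A)$ is Borel because $\varphi^{-1}$ is measurable. Hence
$$\mu(T^{-1}A)=\lambda\big(S^{-1}(\varphi(A))\big)=\lambda(\varphi(A))=\mu(A),$$
so $T$ preserves $\mu$.
\item If $T^{-1}(A)=A$, apply $\varphi$ to both sides to get $S^{-1}(\varphi(A))=\varphi(A)$. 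Ergodicity of $S$ gives $\lambda(\varphi(A))\in\{0,1\}$, and therefore $\mu(A)\in\{0,1\}$.
\end{itemize}
This proves the lemma.

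\textbf{Main obstacle.} The delicate point is the bookkeeping in Step 3: showing that images under $\varphi$ of Borel sets are Borel, and that $\varphi\circ\varphi^{-1}$ is the identity, so that invariance transfers exactly. Both facts rely on $\varphi$ being a bijection with measurable inverse. A secondary check is verifying the hypotheses of Lemma \ref{lem:atomless_isomorphism}: $X$ is standard because it is Polish, and $\mu$ has no point masses because it is non-atomic.
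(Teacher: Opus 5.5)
Your proposal is correct and follows essentially the same route as the paper: you transport an ergodic map $S$ on $([0,1],\mathcal{B}([0,1]),\lambda)$ to $X$ via the Borel isomorphism of Lemma \ref{lem:atomless_isomorphism}, then verify that $T=\varphi^{-1}\circ S\circ\varphi$ preserves $\mu$ and is ergodic by conjugation. The only difference is that you exhibit a concrete $S$ (an irrational rotation or the doubling map), whereas the paper takes ``an arbitrary ergodic transformation'' on $[0,1]$ without noting that one exists, so your version is slightly more complete.
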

	
	% Providing a detailed proof with clear formatting
\begin{proof}
Since $X$ is a Polish space, by Lemma \ref{lem:atomless_isomorphism}, there exists a Borel isomorphism $f: X \to [0,1]$ such that the measure space $(X, \mathcal{B}(X), \mu)$ is isomorphic to $([0,1], \mathcal{B}([0,1]), \lambda|_{[0,1]})$ and, for all $A \in \mathcal{B}([0,1])$, $\mu(f^{-1}(A)) = \lambda|_{[0,1]}(A)$.

Let $S$ be an arbitrary ergodic transformation on $([0,1], \mathcal{B}([0,1]), \lambda|_{[0,1]})$. Define  $T: X \to X$ by
$$
T = f^{-1} \circ S \circ f,
$$
i.e., $T(x) = f^{-1}(S(f(x)))$ for any $x\in X$. We prove that $T$ is an ergodic transformation.
		
First, we prove that $T$ preserves $\mu$. For any Borel set $A \in \mathcal{B}(X)$, by the measure-preserving property of $f$ and $S$,
\begin{eqnarray*}
\mu(T^{-1}(A))&=& \mu\left( f^{-1} \left( S^{-1} \left( f(A) \right) \right) \right)\\
 &=& \lambda|_{[0,1]}\left( S^{-1} \left( f(A) \right) \right)\\
&=&\lambda|_{[0,1]}(f(A))\\
&=&\mu(A).
\end{eqnarray*}
This confirms that $T$ is a measure-preserving transformation.

Next, we verify the ergodicity of $T$. Suppose that there exists a Borel set $A \in \mathcal{B}(X)$ such that $T^{-1}(A) = A$. We show that $\mu(A) = 0$ or $\mu(A) = 1$.
		
Define $B = f(A) \subset [0,1]$, then $B \in \mathcal{B}([0,1])$ and
$$S^{-1}(B)  = S^{-1}(f(A))= f\left( T^{-1}(A) \right).$$
Since $T^{-1}(A) = A$, we have
$$	f\left( T^{-1}(A) \right) = f(A) = B.$$
Consequently, $S^{-1}(B) = B.$	Since $S$ is ergodic with respect to $\lambda|_{[0,1]}$, 		
$$\lambda|_{[0,1]}(B) = 0 \quad \text{or} \quad \lambda|_{[0,1]}(B) = 1.$$
By the measure-preserving property of $f$, $\mu(A) = \lambda|_{[0,1]}(f(A)) = \lambda|_{[0,1]}(B)$, which is equal to $0$ or $1$.
This proves that $T$ is an ergodic transformation and the proof is complete.
\end{proof}

The following result, which is a direct corollary of Birkhoff's ergodic theorem and Theorem \ref{thm:countable_continuous_conv_det}, indicates that the orbits of an ergodic transformation are $\mu$-u.d.\ on a full-measure set.

\begin{corollary} \label{uniformly_distributed}
Let $T$ be an ergodic transformation on $(X, \mathcal{B}(X), \mu)$. Then there exists a set $ W' \subset X $ with $\mu(W') = 1$ such that for each $ x \in W' $, the orbit $\{T^n(x)\}_{n=0}^\infty$ of the ergodic transformation is $\mu$-u.d.\ on $ X $.
\end{corollary}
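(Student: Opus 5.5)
The proof mirrors that of Corollary \ref{slln-uniform}, with the Strong LLN replaced by the Birkhoff Ergodic Theorem (Lemma \ref{lem:birkhoff}). First, I will fix the countable convergence-determining class $\mathcal{V}=\mathcal{U}\cup\mathcal{W}\subset b(X)$ constructed in the proof of Theorem \ref{thm:countable_continuous_conv_det}. Every $f\in\mathcal{V}$ is a bounded Borel function:
\begin{itemize}
\item the elements of $\mathcal{U}$ are bounded and continuous;
\item the elements of $\mathcal{W}$ are indicators $\chi_{K_i^c}$ of open sets.
\end{itemize}
Since $\mu$ is a probability measure, every $f\in\mathcal{V}$ therefore lies in $L^1(X,\mathcal{B}(X),\mu)$.

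Second, for each $f\in\mathcal{V}$ I will apply Lemma \ref{lem:birkhoff} to the ergodic measure-preserving map $T$. This gives a set $W_f\subset X$ with $\mu(W_f)=1$ such that
$$\lim_{N\to\infty}\frac{1}{N}\sum_{n=0}^{N-1} f(T^n(x))=\int_X f\,d\mu \qquad \text{for all } x\in W_f.$$
I will then set $W'=\bigcap_{f\in\mathcal{V}}W_f$. Because $\mathcal{V}$ is countable, $\mu(X\setminus W')\le\sum_{f\in\mathcal{V}}\mu(X\setminus W_f)=0$, so $\mu(W')=1$.

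Third, I will fix $x\in W'$ and put $x_n=T^{n-1}(x)$ for $n\ge1$. Then the orbit $\{T^n(x)\}_{n=0}^\infty$ is, after this index shift, the sequence $\{x_n\}_{n=1}^\infty$, and
$$\frac1N\sum_{n=1}^N f(x_n)=\frac1N\sum_{n=0}^{N-1}f(T^n(x)).$$
So the defining limit of convergence-determining classes holds for every $f\in\mathcal{V}$. By the definition of a convergence-determining class, the orbit is $\mu$-u.d.

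There is no substantial obstacle; the whole content sits in Theorem \ref{thm:countable_continuous_conv_det}. Two points need care. One is that $\mathcal{V}$ contains discontinuous indicator functions, which is why one must use Birkhoff's theorem for $L^1$ functions rather than a continuous-function version. The other is to match the indexing $n=0,\dots,N-1$ of the orbit with the indexing $n=1,\dots,N$ in the definition of u.d. sequences.
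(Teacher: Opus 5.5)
Your proposal is correct and follows the paper's proof: you fix the countable class $\mathcal{V}$ from Theorem \ref{thm:countable_continuous_conv_det}, apply Lemma \ref{lem:birkhoff} to each $f\in\mathcal{V}$, intersect the countably many full-measure sets $W_f$, and invoke the definition of a convergence-determining class. Your remarks on the integrability of the indicators $\chi_{K_i^c}$ and on the index shift $x_n=T^{n-1}(x)$ are small clarifications that the paper leaves implicit.
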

	
\begin{proof}
By Theorem \ref{thm:countable_continuous_conv_det}, there exists a countable convergence-determining class $\mathcal{V} \subseteq b(X)$ with respect to $\mu$.  For each $ f \in \mathcal{V}$, define
$$
W_f = \left\{ x \in X \ \middle|\ \lim_{N \to \infty} \frac{1}{N} \sum_{n=0}^{N-1} f(T^n(x)) = \int_X f \, d\mu \right\}.
$$
By Lemma \ref{lem:birkhoff}, $\mu(W_f) = 1$.  Let
$$
W' = \bigcap_{f \in \mathcal{V}} W_f.
$$
Since $\mathcal{V}$ is countable, we have $\mu(W') = 1$. For any $x \in W'$ and all $f \in \mathcal{V}$, $x \in W_f$  and hence
$$\lim_{N \to \infty} \frac{1}{N} \sum_{n=0}^{N-1} f(T^n(x)) = \int_X f \, d\mu, \quad \forall\ f \in \mathcal{V}.$$
It follows from the definition of  convergence-determining class that  $\{T^n(x)\}_{n=0}^\infty$ is $\mu$-u.d.\ on $X$.
\end{proof}

\section{Concluding remarks}

In this paper, we establish a theoretical framework for u.d.\ sequences on non-compact topological spaces, with a particular focus on $\mu$-u.d.\ sequences on $T_4$ and Polish spaces. Specifically, we prove the existence of a countable convergence-determining class of $\mu$-u.d.\ sequences on Polish spaces, and indicate that, under the existence of convergence-determining class, random sampling and ergodic transformations can be employed to construct $\mu$-u.d.\ sequences on non-compact topological spaces. These results not only advance the pure theory of uniform distribution but also provide a new approach for the development of numerical integration methods in infinite-dimensional settings.

In contrast to the well-developed theory of uniform distribution on compact spaces, particularly uniform distribution mod $1$ in finite-dimensional settings, the study of uniform distribution on general non-compact spaces remains in its infancy.
Based on the results established in this paper, the following topics merit further investigation.

\begin{itemize}
  \item[(1)] The construction of u.d.\ sequences in more concrete topological linear spaces, such as separable Hilbert spaces and the space of continuous functions defined on the interval $[0,1]$, both of which are Polish spaces;

  \item[(2)] The characterization of discrepancy for u.d.\ sequences in infinite-dimensional spaces, as well as the sharp bound of such discrepancies;

  \item[(3)] The perturbation stability of u.d.\ sequences in infinite-dimensional spaces and its applications to error analysis in numerical integration.
\end{itemize}

%{\bf Acknowledgements.}

%\appendix

%\section{Appendix}

%\section*{Acknowledgement}

\end{document}